
\documentclass{amsart}
\usepackage{amssymb,latexsym}
\usepackage{tikz}
\usetikzlibrary{positioning}
\theoremstyle{plain}
\newtheorem{theorem}{Theorem}
\newtheorem{corollary}{Corollary}
\newtheorem*{main}{A~Main~Theorem}
\newtheorem*{mainl}{A~Main~Lemma}
\newtheorem*{mainc}{A~Main~Corollary}
\newtheorem*{main1}{A~Main~Theorem1}
\newtheorem*{mainl1}{A~Main~Lemma1}
\newtheorem*{mainc1}{A~Main~Corollary1}
\newtheorem*{main2}{A~Main~Theorem2}

\newtheorem{lemma}{Lemma}

\theoremstyle{definition}
\newtheorem{definition}{Definition}

\theoremstyle{remark}

\newtheorem*{note}{Note}

\numberwithin{equation}{section}

\begin{document}
\title[NSBDL]{The Nearly Boolean Nature of Core Regular Double Stone Algebras, CRDSA. – \\ \tiny(Ternary set partitions, CRDSA, embeddings and dual equivalences)}
\author{Daniel J. Clouse\\Research\\
        U.S. Department of Defense\\
        Ft. Meade, MD 20755, USA}

\begin{abstract}
In \cite{CRDSA} many useful results regarding the center of a core regular double Stone algebra, CRDSA, are shown that begin to indicate the \textit{nearly Boolean} nature of CRDSA which we focus on here. We start with a model of network security where individual nodes are considered to be in one of 3 states. Let J be any non-empty set of network nodes, not necessarily finite. We define the node set bounded distributive lattice through the pairwise disjoint subsets of J with the well known binary operations of ternary set partitions and note J = {1} is our minimal case. We then show the resultant bounded distributive lattice is isomorphic to $C_3^J$ where $C_3$ is the 3 element chain CRDSA. We derive that \textbf{every CRDSA is a subdirect product of} $\mathbf{C_3}$, similarly as for Boolean algebras and $C_2$. We use these results along with a few known results to show a main result, namely \textbf{every Boolean algebra is the center of some CRDSA}. We then use that result to characterize all subalgebras of a finite core regular double Stone Algebras, namely \textbf{a CRDSA A is a subalgebra of} $\mathbf{C_3^J}$ \textbf{for some finite J} $\mathbf{\leftrightarrow A \cong C_3^K}$ \textbf{for some} $\mathbf{K \leq J}$.\\ \indent Next we show that $\mathbf{C_3}$ \textbf{is primal} which implies that \textbf{the variety generated by} $\mathbf{C_3}$ \textbf{is dually equivalent to the category of Stone spaces and hence the category of Boolean algebras.} In some sense this is a last step towards our goal of establishing CRDSA as \textit{nearly Boolean}, but leaves us a bit dissatisfied as to our understanding of CRDSA in the dual topological category. Hence we continue by establishing a duality between the category of CRDSA and specifically crafted bi-topological spaces that enables better understanding of the "nearly boolean" nature of CRDSA in the dual category.  More succinctly, duality through  refinement of a pre-established duality of pairwise Stone spaces and bounded distributive lattices \cite{guram}, and we use \cite{guram} as our primary source for bi-topological spaces as well.\\ \indent Towards this end we first establish \textbf{necessary and sufficient conditions on a pairwise zero-dimensional space such that it will have a core regular double Stone algebra base}. For the purposes of this note only we will call any pairwise zero-dimensional space $(X,t_1,t_2)$ with a core regular double Stone algebra bases $(B_1,\vee,\wedge,\emptyset,X)$ and $(B_2,\vee,\wedge,\emptyset,X)$ a core regular double pairwise zero-dimensional space. We note that these conditions are indicative of the \textit{nearly Boolean} CRDSA are.(Recall given any topological space X, the collection of subsets of X that are clopen (both closed and open) is a Boolean algebra.) For example, if $u \in B_1$ is not $t_1$ clopen/complemented then $Cl_1(u)\in B_1$ and is $t_1$ clopen/complemented. \\ \indent Next we show that \textbf{the pairwise Stone space derived from a CRDSA L has a base that is a CRDSA isomorphic to L}. For the purposes of this note only we will call any such pairwise Stone space a core regular double pairwise Stone space. Then we establish \textbf{necessary and sufficient conditions for a bi-continuous map to have an inverse that is a CRDSA homomorphism}. These results are topologically indicative of just how ``nearly Boolean'' CRDSA are, these inverses must respect the appropriate conditions on the boundary of non-clopen elements of the bases of $t_1$ and $t_2$. \\ \indent From that result we validate the claim that \textbf{the category of core regular double Stone algebras is dually equivalent to what we call the category of core regular double pairwise Stone spaces}. We note that the conditions for this duality should be able to be easily relaxed to yield a duality for a less rigid subclass of bounded distributive lattices than CRDSA, bounded distributive pseudo-complemented lattices for example.
\end{abstract}

\address{Research\\
        U.S. Department of Defense\\
        Ft. Meade, MD 20755, USA} 
\email{djclouse@gmail.com}
\thanks{*Our thanks to Eran Crockett for noting that $C_3$ is primal, hence Hu's theorem applies, and to Guram Bezhanishvili for very useful correspondence} 
\keywords{regular double Stone algebra, Boolean algebras, core regular double Stone algebras} 
\subjclass[2000]{Primary: 06B10; Secondary: 06D05}
\date{January 01, 2018}

\maketitle
\newpage
\section{Introduction}\label{S:intro} 
We start by proving the following result:

\begin{main} 
Every Boolean algebra is the center of some core regular double Stone algebra.
\end{main}

We use that to derive the following:

\begin{mainc}\label{C3JsubiffC3K}
A CRDSA A is a subalgebra of $C_3^J$ for some finite J $\leftrightarrow A \cong C_3^K$ for some $K \leq J$.
\end{mainc}

We further show

\begin{mainl}\label{c3dual}
The category of CRDSA is dually equivalent to the category of Stone spaces and hence to the category of Boolean algebras. Furthermore, the category of CRDSA is dually equivalent to $V(Z_3)$ where denote the ring of integers modulo 3.
\end{mainl}

This leaves us a bit dissatisfied as to our understanding of CRDSA in the dual topological category. Hence we continue by establishing a duality between the category of CRDSA and specifically crafted bi-topological spaces that enables better understanding of the "nearly boolean" nature of CRDSA in the dual category. We prove the following:

\begin{main1} 
A pairwise zero-dimensional space $(X,t_1,t_2)$ will have a core regular double Stone algebra base $(B_1,\cup,\cap,\emptyset,X) \leftrightarrow$
  \begin{enumerate}
  \item $v \in B_2 \rightarrow Int_1(v) \in B_1$, affording $u^* = Int_1(u^c) = Cl_1(u)^c \in B_1$, the pseudocomplement.
  \item $u \in B_1 \rightarrow Int_2(u) \in B_2$, affording $u^+ = Int_2(u)^c = Cl_2(u^c) \in B_1$, the dual pseudocomplement.
  \item $u \in B_1 \rightarrow Cl_1(u)^c = u^*$ is clopen in $t_1$, affording the Stone identity $u^* \wedge u^{**} = X$ and $u^*$ being complemented.
  \item $u \in B_1 \rightarrow Cl_2(u^c)^c = Int_2(u) = (u^+)^c$ is clopen in $t_2$, affording the Stone Identity $u^+ \wedge u^{++} = \emptyset$ and $u^+$ being complemented.
  \item For any $u,w \in B_1$, $Cl_1(u) = Cl_1(w)$ and $Int_2(u) = Int_2(w) \rightarrow u = w$.
  \item $k(B_1)=\{u \in B_1 | Bd_1(u) = u^c$ and $Bd_2(u^c) = u\} \neq \emptyset$.
  \end{enumerate}
  For the purposes of this note we will call any such pairwise zero-dimensional space $(X,t_1,t_2)$ a core regular double pairwise zero-dimensional space and similarly for the corresponding pairwise Stone spaces.
\end{main1}

Let L be a bounded distributive lattice and X = pf(L) be the set of prime filters of L. We define the following::
\begin{definition}
  $\Phi_+: L \rightarrow \mathcal{P}(X)$ by $\Phi_+(a) = \{x \in X | a \in x\}$ and\\$B_+ = \Phi_+[L] = \{\Phi_+(a) | a \in L\}$.
\end{definition}

\begin{mainl1}\label{subgen1}
If L is a core regular double Stone algebra then $(B_+,\cap,\cup,\emptyset,X)$ is also a core regular double Stone algebra isomorphic to L. 
\end{mainl1}

\begin{main2}
Let $(X,t_1,t_2)$ and $(Y,\gamma_1,\gamma_2)$ be a bitopological space with CRDSA bases $B_1$, $B_2$ and $D_1$, $D_2$, respectively, $f:X \rightarrow Y$ be a bi-continuous map and $u,w \in D_1$. The $f^{-1}$ is a CRDSA homomorphism $\leftrightarrow f^{-1}(Bd_1(u)) \subseteq Cl_1(f^{-1}(u))$ and $f^{-1}(Bd_2(u^c)) \subseteq Cl_2(f^{-1}(u^c))$.
\end{main2}

\begin{mainc1}
The category of core regular double Stone algebras is dually equivalent to the category of core regular double pairwise Stone spaces.
\end{mainc1}

\section{The $C_3^J$ construction}\label{C3J} 
For the basic notation in lattice theory and universal algebra, see Burris and Sankappanavar~\cite{BS}.  We start with some
definitions:

\begin{definition}\label{NSBDLdef}
   Let $J$ be a non-empty set of network nodes and let \\ $L = \{(X_1,X_2)| X_1,X_2 \subseteq J $ such that $X_1 \cap X_2 = \emptyset \}$. \\ We define binary operations $\vee$ and $\wedge$ on L as follows:
\begin{itemize}
	\item $(X​_1,X​_2​) \vee ​(Y​_1​,Y​_2​) = (X​_1​​ \cup ​Y​_1​,X​_2​ \cap Y​_2​)$ and;\label{m-i}
	\item $(X​_1,X​_2​) \wedge ​(Y​_1​,Y​_2​) = (X​_1​​ \cap ​Y​_1​,X​_2​ \cup Y​_2​)$. \label{j-i}
	\begin{itemize}
		\item The fact that $L$ is a bounded distributive lattice with bounds $(J,\emptyset)$ and $(\emptyset,J)$ is well known.\label{c-s}
	\end{itemize}
   \end{itemize}
We refer to the operations $\vee$, $\wedge$ as join, meet respectively, $X​_1$ are the "known good" nodes, $X​_2$ ​ are the "known bad" and the nodes in $(X​_1​ \cup X​_2​)^c$ are unknown. We call this lattice the "Node Set bounded distributive lattice" and denote it $NS_J$ for a given node set $J$.
\end{definition}

We note that if $J$ is defined and $X_1$ and $X_2$ are understood, we make no mention of $(X​_1​ \cup X​_2​)^c$. We further note that in addition to $(J,\emptyset)$ and $(\emptyset,J)$, there is another very important element of $NS_J$ and it is $S=(\emptyset,\emptyset)$. Lastly, it is very important to note that these binary operations coincide with the following partial ordering on L where ​$\leq$​ denotes the familiar “less than or equal to” concept:

\begin{itemize}
	\item $(X​_1,X​_2​) \leq ​(Y​_1​,Y​_2​) \leftrightarrow X​_1​​ \subseteq ​Y​_1$ and $Y​_2​​ \subseteq ​X_2$
\end{itemize}

So {\bf“moving up” = “more known good” nodes and/or “less known bad” nodes} and conversely! Our minimal example, $J=\{1\}$ is evidently the 3 element chain denoted $C_3$ and Figure 1. gives the Hasse diagrams of $C_3$ and also $C_3^2$ for $J=\{1,2\}$. We find these diagrams useful, although we mention $J=\{1,2,3\}$ is the minumum case where at least one node can be in all 3 states $X​_1$, $X​_2$ ​and $(X​_1​ \cup X​_2​)^c$.

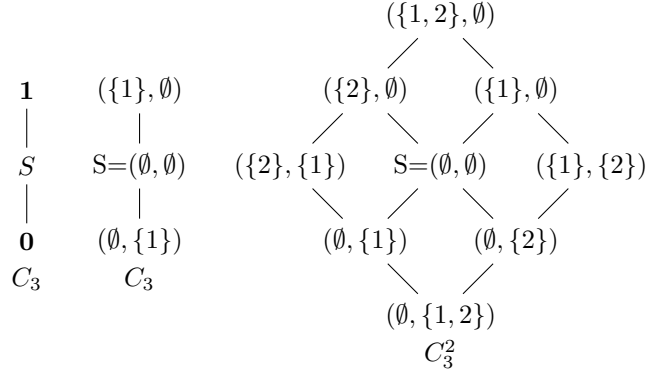
\begin{figure}\label{c3}
\caption{Minimal Examples, $C_3$ and $C_3^2$}
    \begin{center}
    \begin{tikzpicture}
        \tikzstyle{every node} = [rectangle]
	\node (s0) at (0,.5) {$C_3^2$};
        \node (s1) at (0,1) {($\emptyset,\{1,2\}$)};
        \node (s2) at (-1,2) {($\emptyset,\{1\}$)};
        \node (s3) at (1,2) {($\emptyset,\{2\}$)};
        \node (s4) at (-2,3) {($\{2\},\{1\}$)};
	\node (s5) at (0,3) {S=($\emptyset,\emptyset$)};
	\node (s6) at (2,3) {($\{1\},\{2\}$)};     
        \node (s7) at (-1,4) {($\{2\},\emptyset$)};
	\node (s8) at (1,4) {($\{1\},\emptyset$)};
	\node (s9) at (0,5) {($\{1,2\},\emptyset$)};
	\node (s13) at (-4,1.5) {$C_3$};
	\node (s10) at (-4,2) {($\emptyset,\{1\}$)};
	\node (s11) at (-4,3) {S=($\emptyset,\emptyset$)};
	\node (s12) at (-4,4) {($\{1\},\emptyset$)};
	\node (s17) at (-5.5,1.5) {$C_3$};
	\node (s14) at (-5.5,2) {$\mathbf{0}$};
	\node (s15) at (-5.5,3) {$S$};
	\node (s16) at (-5.5,4) {$\mathbf{1}$};
        \foreach \from/\to in {s1/s2, s1/s3, s2/s4, s2/s5, s3/s5, s3/s6, s4/s7, s5/s7, s5/s8, s6/s8, s7/s9, s8/s9, s10/s11, s11/s12, s14/s15, s15/s16}
            \draw[-] (\from) -- (\to);
    \end{tikzpicture}
    \end{center}
\end{figure}

\begin{definition}\label{2p}
An element $x$ of a bounded lattice $L$ ​is said to have a pseudocomplement if there exists a greatest element​ $x^* \in L$, disjoint from x, with the property that $x \wedge x^* = 0$. An element $x \in l$ is said to have a dual pseudocomplement if there exists a ​least element​ $x^+ \in L$, disjoint from x, with the property that $x \vee x^+ =1$. The lattice L itself is called a ​doubly​ ​pseudocomplemented lattice​ if every element of L is simultaneously pseudocomplemented and dually pseudocomplemented.
\end{definition}

It is known that every finite distributive lattice is pseudocomplemented. We call out unary operations $^*$ and $^+$ for $NS_J$ here and claim without proof that under them $NS_J$ is a doubly pseudocomplemented lattice for arbitrary $J$.

\begin{definition}\label{NSBDL2p}
Let $(X_1,X_2) \in NS_J$ 
\begin{itemize}
	\item $(X_1,X_2)^*=(X_2,X_2^c)$ and
	\item $(X_1,X_2)^+=(X_1^c,X_1)$
\end{itemize}
define the psuedocomplement and dual pseudocomplement operations of $NS_J$.
\end{definition}
We will now treat $NS_J$ as doubly pseudocomplemented prove the following result:

\begin{lemma}\label{L:ds} 
   $NS_J \cong C_3^J$
\end{lemma}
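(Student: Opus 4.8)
The plan is to exhibit an explicit isomorphism between $NS_J$ and $C_3^J$ and check it respects the lattice structure (and, since the paper has just introduced $^*$ and $^+$ on $NS_J$, the pseudocomplement operations as well). Identify $C_3$ with the chain $\{0 < S < 1\}$; then an element of $C_3^J$ is just a function $f\colon J \to \{0,S,1\}$. First I would define $\varphi\colon NS_J \to C_3^J$ by sending a pair $(X_1,X_2)$ of disjoint subsets to the function $\varphi(X_1,X_2)$ that takes value $1$ on $X_1$, value $0$ on $X_2$, and value $S$ on $(X_1\cup X_2)^c$; this is well defined precisely because $X_1\cap X_2=\emptyset$. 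The inverse is equally explicit: given $f\colon J\to C_3$, set $X_1=f^{-1}(1)$ and $X_2=f^{-1}(0)$, which are automatically disjoint, so $\varphi$ is a bijection.

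Next I would verify that $\varphi$ is a lattice homomorphism by working pointwise. Fix $j\in J$ and compare $\varphi((X_1,X_2)\vee(Y_1,Y_2))(j)$ with $\max\bigl(\varphi(X_1,X_2)(j),\varphi(Y_1,Y_2)(j)\bigr)$, the join in $C_3^J$. Using $\vee$ on $NS_J$ given by $(X_1\cup Y_1, X_2\cap Y_2)$, the coordinate $j$ lands in the first component iff $j\in X_1$ or $j\in Y_1$ (value $1$), lands in the second component iff $j\in X_2$ and $j\in Y_2$ (value $0$), and otherwise gets value $S$; a short case check on the nine possibilities for the pair of values at $j$ confirms this equals the coordinatewise max. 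The argument for $\wedge$ versus coordinatewise min is the exact dual. It is cleanest to phrase this via the order: I would note that the partial order on $NS_J$ displayed in the text, $(X_1,X_2)\le(Y_1,Y_2)\iff X_1\subseteq Y_1$ and $Y_2\subseteq X_2$, corresponds under $\varphi$ to the pointwise order on $C_3^J$, and an order isomorphism between lattices is automatically a lattice isomorphism, which dispatches both $\vee$ and $\wedge$ at once.

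Finally, since $NS_J$ has just been equipped with $^*$ and $^+$ in Definition~\ref{NSBDL2p}, I would check $\varphi$ commutes with these. In $C_3^J$ the pseudocomplement is computed coordinatewise, where on $C_3=\{0<S<1\}$ one has $0^*=1$, $S^*=0$, $1^*=0$, and dually $1^+=0$, $S^+=1$, $0^+=1$. On the $NS_J$ side, $(X_1,X_2)^*=(X_2,X_2^c)$ sends coordinate $j$ to $1$ if $j\in X_2$ (i.e.\ old value $0$), and to $0$ if $j\in X_2^c$ (old value $1$ or $S$) --- matching the $C_3$ table --- and similarly $(X_1,X_2)^+=(X_1^c,X_1)$ gives $0$ when $j\in X_1$ and $1$ otherwise, again matching. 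Hence $\varphi$ is an isomorphism of doubly pseudocomplemented lattices, and a fortiori $NS_J\cong C_3^J$ as bounded distributive lattices.

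I do not expect a serious obstacle here; the only thing to be careful about is bookkeeping the three-way partition of $J$ consistently and making sure the well-definedness (disjointness) hypothesis is used in both directions. The mild subtlety worth a sentence is that $C_3$ is most naturally described as a chain, so one should fix once and for all the identification of its middle element with $S=(\emptyset,\emptyset)$ and its top/bottom with $(\{1\},\emptyset)$ and $(\emptyset,\{1\})$, matching the $J=\{1\}$ picture in Figure~1, so that the coordinatewise operations on $C_3^J$ literally are the $\max$/$\min$ referenced above.
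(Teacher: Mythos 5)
Your proposal is correct and follows essentially the same route as the paper: the same explicit coordinatewise map sending $(X_1,X_2)$ to the tuple valued $1$ on $X_1$, $0$ on $X_2$, $S$ elsewhere, the same reduction of the lattice-isomorphism check to an order-isomorphism via the displayed partial order, and the same pointwise verification that $^*$ and $^+$ (with the $C_3$ tables $0\mapsto 1$, $S\mapsto 0$, $1\mapsto 0$ and $0\mapsto 1$, $S\mapsto 1$, $1\mapsto 0$) are preserved. No gaps to report.
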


\begin{proof} 
   We define $\alpha : NS_J \rightarrow C_3^J$ pointwise for $(X_1,X_2) \in NS_J$ by $\alpha(X_1,X_2)=(x_i)_{i \in J}=$
\begin{enumerate}
	\item 1 if $i \in X_1$
	\item S if $i \in (X​_1​ \cup X​_2​)^c$
	\item 0 if $i \in X_2$
\end{enumerate}
Clearly $\alpha$ is bijective and to see that $\alpha$ and $\alpha^{-1}$ are order preserving follows from the partial order itself. Recall that $(X​_1,X​_2​) \leq ​(Y​_1​,Y​_2​) \leftrightarrow X​_1​​ \subseteq ​Y​_1$ and $Y​_2​​ \subseteq ​X_2$, we consider $\alpha^{-1}$ and $\alpha$ below:
\begin{enumerate}
	\item $\alpha^{-1}$,suppose $(x_i) \leq (y_i)$
	\begin{enumerate}
		\item $x_i=1 \rightarrow y_i=1$ and hence $X_1 \subseteq Y_1$.
		\item $y_i=0 \rightarrow x_i=0$ and hence $Y_2 \subseteq X_2$.
	\end{enumerate}	
	\item $\alpha$, suppose $(X_1,X_2) \leq (Y_1,Y_2)$ and consider their images $(x_i)$ and $(y_i)$
	\begin{enumerate}
		\item Suppose $x_i=1$, then $X_1 \subseteq Y_1 \rightarrow y_i=1$.
		\item Suppose $x_i=S$, if $y_i=0$ then $\exists i$ such that $i \in Y_2$ and $i \in (X​_1​ \cup X​_2​)^c$ yielding a contradiction with $Y​_2​​ \subseteq ​X_2$.
	\end{enumerate}
\end{enumerate}
The fact that it follows that $\alpha$ is a lattice isomorphism is well known, e.g. Theorem 2.3 of ~\cite{BS}. 
The fact that $\alpha$ preserves $*$ and $+$ follows from their definitions and the fact that they are extended point-wise from $C_3$ to $C_3^J$. Recall: 
\begin{enumerate}
	\item $* : L \rightarrow L$ is defined by $(X_1,X_2) \rightarrow (X_2,X_2^c)$ and;
	\item $+ : L \rightarrow L$ is defined by $(X_1,X_2) \rightarrow (X_1^c,X_1)$ so that 
	\begin{enumerate}
		\item $* : C_3 \rightarrow C_3$ is defined by $0 \rightarrow 1$, $S \rightarrow 0$ and $1 \rightarrow 0$;
		\item $+ : C_3 \rightarrow C_3$ is defined by $0 \rightarrow 1$, $S \rightarrow 1$ and $1 \rightarrow 0$;
	\end{enumerate}	
\end{enumerate}
The fact that $\alpha$ preserves the constants $0 = (\emptyset,J)$ and $1 = (J,\emptyset)$ is clear and we note that it also preserves $S = (\emptyset,\emptyset)$. Lastly, $*$ and $+$ are clearly (dual)psuedocomplements of $C_3$ and hence also for $C_3^J$.
\end{proof}

\begin{note}
As a result of Lemma \ref{L:ds} we could declare all 3 elements of $C_3$ to be constants and consider it as a generating structure for our \textit{universe of node set bounded distributive lattices}, e.g. (sub)direct products of $C_3$.
\end{note}

\begin{definition}\label{DSA}
A double Stone algebra (DSA) $<L,\wedge,\vee,*,+,0,1>$ is an algebra of type $<2,2,1,1,0,0>$ such that:
\begin{enumerate}
	\item $<L,\vee,\wedge,0,1>$ is a bounded distributive lattice
	\item $x^*$ is the pseudocomplement of $x$ i.e $y \leq x^* \leftrightarrow y \wedge x = 0$
	\item $x^+$ is the dual pseudocomplement of $x$ i.e $y \geq x^+ \leftrightarrow y \vee x = 1$
	\item $x^* \vee x^{**}=1$, $x^+ \wedge x^{++}=0$, e.g. the Stone identities.
\end{enumerate}
\end{definition}
Condition 4. is what distinguishes from double p-algebra and conditions 2. and 3. are equivalent to the equations
\begin{itemize}
	\item $x \wedge (x \wedge y)^* = x \wedge y^*$, $x \vee (x \vee y)^+ = x \vee y^+$
	\item $x \wedge 0^* = x$, $x \vee 1^+ = x$
	\item $0^{**} = 0$, $1^{++} = 1$
\end{itemize}
so that DSA is an equational class.
\begin{note}
We have already shown that $NS_J \cong C_3^J$ is doubly pseudocomplemented and the definitions of $*$ and $+$ clearly demonstrate Condition 4. above.
\end{note}
\begin{definition}\label{RDSA} A double Stone algebra L is called regular, if it additionally satisfies
\begin{itemize}
	\item $x \wedge x^+ \leq y \vee y^*$
	\begin{itemize}
		\item this is equivalent to $x^+ = y^+$ and $x^* = y^* \rightarrow x = y$
	\end{itemize}
\end{itemize}
\end{definition}
\begin{note}
Regular means any 2 congruences having a common class are the same.
\end{note}
\begin{corollary}
$NS_J \cong C_3^J$ is a regular double Stone Algebra
\end{corollary}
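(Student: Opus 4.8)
The plan is to verify that $NS_J \cong C_3^J$ satisfies the regularity condition from Definition \ref{RDSA}, namely that $x \wedge x^+ \leq y \vee y^*$ for all $x, y$, or equivalently that $x^+ = y^+$ together with $x^* = y^*$ forces $x = y$. Since we have already established in Lemma \ref{L:ds} that $NS_J \cong C_3^J$ is a double Stone algebra with the pointwise operations inherited from $C_3$, and since regularity is expressed by equations/quasi-equations that are preserved under products, it suffices to check regularity on the single factor $C_3$. So the first step is to reduce the claim to the three-element chain $C_3$ by invoking that the quasi-identity ``$x^+ = y^+$ and $x^* = y^* \Rightarrow x = y$'' holds in a product iff it holds in each factor (this follows because the operations on $C_3^J$ are computed coordinatewise, so two tuples agreeing in $*$ and $+$ coordinatewise must agree coordinatewise under the single-factor implication).

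Next I would simply tabulate $*$ and $+$ on $C_3 = \{0, S, 1\}$ using the formulas recalled in the proof of Lemma \ref{L:ds}: $0^* = 1$, $S^* = 0$, $1^* = 0$, and $0^+ = 1$, $S^+ = 1$, $1^+ = 0$. The point is that the pair $(x^*, x^+)$ distinguishes the three elements: $0 \mapsto (1,1)$, $S \mapsto (0,1)$, $1 \mapsto (0,0)$. Since these three pairs are pairwise distinct, no two distinct elements of $C_3$ share both their pseudocomplement and dual pseudocomplement, which is exactly the regularity quasi-identity. Alternatively, one can check $x \wedge x^+ \leq y \vee y^*$ directly: $x \wedge x^+$ is $0$ for $x \in \{0, S\}$ and equals $1 \wedge 0 = 0$ for $x = 1$, so the left side is always $0$, and $0 \leq y \vee y^*$ trivially; this shows the inequality form even more cheaply, and it too lifts to $C_3^J$ since $\leq$ on the product is coordinatewise.

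Finally I would combine these: $C_3$ is regular, products of regular double Stone algebras are regular (the defining condition being a universally quantified inequality, hence preserved by products), so $C_3^J$ is regular, and therefore so is its isomorphic copy $NS_J$. There is essentially no obstacle here — the only thing to be a little careful about is making explicit that we are using the equational/quasi-equational characterization of regularity so that the reduction to $C_3$ and the passage back up to $C_3^J$ are both legitimate; this is the same style of argument already used at the end of the proof of Lemma \ref{L:ds} to transfer the Stone identities. Everything else is the short finite computation on the three-element chain.
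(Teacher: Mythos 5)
Your primary argument is correct and is essentially the paper's proof: the paper also verifies the quasi-identity form of regularity ($x^*=y^*$ and $x^+=y^+ \Rightarrow x=y$) coordinatewise, using exactly the tables $0\mapsto(1,1)$, $S\mapsto(0,1)$, $1\mapsto(0,0)$ for $(x^*,x^+)$ on $C_3$; your explicit remark that the quasi-identity lifts from the factor to the product is the same pointwise reduction the paper performs implicitly.

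One slip in your ``even cheaper'' alternative: it is not true that $x\wedge x^+=0$ for $x=S$, since $S^+=1$ gives $S\wedge S^+=S$ (indeed the core element of a CRDSA can never satisfy $k\wedge k^+=0$). The inequality check still goes through, but the correct observation is that $x\wedge x^+\in\{0,S\}$ while $y\vee y^*\in\{S,1\}$, so $x\wedge x^+\le S\le y\vee y^*$ for all $x,y\in C_3$, and this lifts coordinatewise as you say. With that repair, both of your routes are sound; the main one is the one to keep, and it matches the paper.
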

\begin{proof}
We have already shown $C_3^J$ to be a double Stone algebra. Let $x,y \in C_3^J$ and suppose $x^*=y^*$ and $x^+=y^+$.
\begin{itemize}
	\item $x_i=1 \rightarrow y_i^*=0=y_i^+ \rightarrow y_i=1$
	\item $x_i=S \rightarrow ((y​_i​^+=1 \rightarrow y​_i= 0$ or $S)$ and $(y​_i​^*=0 \rightarrow y_i= S$ or $1))​ \rightarrow​ y​_i=S$
	\item $x_i=0 \rightarrow y_i^+=1 \rightarrow y_i=0$
\end{itemize}
So, we see that $x = y$ and $C_3^J$ is regular.
\end{proof}
It is straightforward to see that $NS_J$ is regular under the operations from Definition \ref{RDSA}. We now note some useful definitons and known results.
\begin{definition}\label{center}
Let A be a regular double Stone algebra. An element $a \in A$ is called central if $a^*=a^+$. The set of all central elements of A is called the center of A and is denoted by C(A); that is $C(A)=\{a \in A|a^*=a^+\}=\{a^* |a \in a\}=\{a^+ |a \in a\}$.  
\end{definition}

Now we cite the following Theorem from \textit{Centre of Core Regular Double Stone Algebra} \cite{CRDSA}:

\begin{theorem}\label{centerboolsub} 
Let A be a regular double Stone algebra. Then $C(A)$ is a Boolean sub algebra of A with respect to the induced operations $\wedge$, $\vee$ and $^*$.\cite{CRDSA}
\end{theorem}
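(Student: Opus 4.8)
The plan is to identify $C(A)$ with the set of complemented elements of the underlying bounded distributive lattice $\langle A,\vee,\wedge,0,1\rangle$, and then invoke the classical fact that the complemented elements of a distributive lattice form a Boolean sublattice.

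First I would show that for $a\in A$ the conditions ``$a$ is central'' (i.e.\ $a^*=a^+$) and ``$a$ has a complement in the lattice'' are equivalent, and that when they hold the complement is unique and equals $a^*=a^+$. The forward direction is immediate: if $a^*=a^+$ then $a\wedge a^*=0$ by Definition~\ref{DSA}(2) and $a\vee a^*=a\vee a^+=1$ by Definition~\ref{DSA}(3), so $a^*$ is a complement of $a$. For the converse, if $a\vee a'=1$ and $a\wedge a'=0$, then $a'\le a^*$ from $a\wedge a'=0$, while $a^*=a^*\wedge(a\vee a')=(a^*\wedge a)\vee(a^*\wedge a')=a^*\wedge a'\le a'$ by distributivity; hence $a^*=a'$, and dually $a^+=a'$, so $a$ is central. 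Uniqueness of complements is the standard distributive-lattice argument.

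Next I would record that $C(A)$ contains $0$ and $1$ (with complements $1$ and $0$ respectively) and is closed under $\wedge$, $\vee$ and $^*$. For $a,b\in C(A)$ with complements $a^*,b^*$, a routine distributivity computation shows $a^*\vee b^*$ is a complement of $a\wedge b$ and $a^*\wedge b^*$ is a complement of $a\vee b$, so $a\wedge b,\ a\vee b\in C(A)$ by the equivalence above; and $a^*$ is complemented (its complement being $a$, since $a^{**}=a$ whenever $a$ is complemented), so $a^*\in C(A)$ and, moreover, $^*$ restricted to $C(A)$ is exactly the unique complementation operation there.

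Finally, assembling these facts, $\langle C(A),\vee,\wedge,^*,0,1\rangle$ is a sublattice of a distributive lattice, hence itself a bounded distributive lattice in which every element is complemented --- that is, a Boolean algebra --- and all of its operations are restrictions of those of $A$, so it is a Boolean subalgebra. There is no genuine obstacle here; the step requiring the most care is the equivalence between centrality and admitting a complement, together with the bookkeeping of which axioms it uses (the pseudocomplement and dual-pseudocomplement laws plus distributivity, not the Stone identities or regularity). After that the statement reduces to the textbook fact that the complemented elements of a distributive lattice form a Boolean algebra. One could instead simply cite \cite{CRDSA}, but this argument is short and self-contained.
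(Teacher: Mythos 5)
Your argument is correct, and it differs from the paper in that the paper gives no proof at all for this statement: Theorem~\ref{centerboolsub} is simply quoted from Srikanth--Kumar \cite{CRDSA}. Your route --- proving $C(A)=\{a\in A : a^*=a^+\}$ coincides with the set of complemented elements, with the complement of a central $a$ being $a^*=a^+$, and then invoking the standard fact that the complemented elements of a bounded distributive lattice form a Boolean sublattice closed under $\wedge$, $\vee$ and complementation --- is sound: the forward direction uses only $a\wedge a^*=0$ and $a\vee a^+=1$, and your converse computation $a^*=a^*\wedge(a\vee a')=(a^*\wedge a)\vee(a^*\wedge a')\le a'$ together with $a'\le a^*$ (and its dual) is exactly the right distributivity argument. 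What this buys, beyond self-containment, is a sharper bookkeeping of hypotheses: as you observe, only the pseudocomplement and dual-pseudocomplement laws plus distributivity are used, so the theorem holds in any double p-algebra; neither regularity nor the Stone identities is needed. (The Stone identities do matter for the alternative descriptions $C(A)=\{a^*\mid a\in A\}=\{a^+\mid a\in A\}$ recorded in Definition~\ref{center}, since it is $a^*\vee a^{**}=1$ that makes every $a^*$ complemented, hence central --- but that is not part of the statement you were asked to prove.) So your proof is a legitimate, slightly more general replacement for the citation.
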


\begin{definition}\label{dense}
Every element x of a double Stone algebra with the property $x^* = 0$ (or equivalently, $x** =1$) is called dense. Every element of the form $x \vee x^*$ is dense and we denote the set of all dense elements of L, D(L). Every element x with the property $x^+ = 1$ (or equivalently, $x^{++} = 0$) is called dually dense. Every element of the form $x \wedge x^+$ is dually dense and we denote the set of all dual dense elements of L, $\overline{D(L)}$.
\end{definition}

\begin{definition}\label{core}
The core of a double Stone algebra L is defined to be $K(L) = D(L) \cap \overline{D(L)}$ and we call a regular double Stone algebra with non-empty core a core regular double Stone algebra, CRDSA.\cite{CRDSA}
\end{definition}

\begin{note}
We note that $S = (\emptyset,\emptyset) = D(L) \cap \overline{D(L)}$ is the only element of $NS_J$ that is simultaneously dense and dually dense. Clearly $C_3$ and hence $C_3^J$ is a CRDSA and from here forward we treat it as such. For any CRDSA A, $|K(A)| = 1$ follows easily from regularity.(This reflects our desire to define S to be a constant and re-define CRDSA as an algebra of type $<2,2,1,1,0,0,0>$). We recall that $\alpha$ from Lemma \ref{L:ds} preserves $S$ and note that we have shown the following:
\end{note}

\begin{corollary}
$NS_J \cong C_3^J$ as CRDSA.
\end{corollary}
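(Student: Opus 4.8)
\noindent\emph{Proof proposal.} The plan is to reuse the map $\alpha : NS_J \to C_3^J$ already built in Lemma~\ref{L:ds}. By that lemma together with the two corollaries that follow it, $\alpha$ is an isomorphism of regular double Stone algebras: it preserves $\vee$, $\wedge$, $^*$, $^+$, $0$, and $1$, and both $NS_J$ and $C_3^J$ are regular. Since a CRDSA is, in the enriched signature of type $\langle 2,2,1,1,0,0,0\rangle$, exactly an RDSA with the (unique) core element adjoined as a nullary operation $S$ — and $|K(A)|=1$ for every CRDSA $A$ by the Note preceding the statement — it suffices to check that $\alpha$ carries the core of $NS_J$ to the core of $C_3^J$.

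First I would observe that any RDSA homomorphism automatically respects $D(L)$, $\overline{D(L)}$, and hence $K(L)=D(L)\cap\overline{D(L)}$, since these sets are defined purely in terms of $^*$, $^+$, $0$, and $1$; in particular $\alpha[K(NS_J)]=K(C_3^J)$. Next I would pin down both cores explicitly. On the $NS_J$ side the Note records $K(NS_J)=\{S\}$ with $S=(\emptyset,\emptyset)$; on the $C_3^J$ side, denseness and dual denseness in a product are computed coordinatewise, so $K(C_3^J)$ is the constant tuple $(S)_{i\in J}$ where $S$ is the middle element of $C_3$. Finally I would verify the one computation $\alpha(\emptyset,\emptyset)=(S)_{i\in J}$: for each $i\in J$ we have $i\notin\emptyset$, so $i\in(\emptyset\cup\emptyset)^c$, and clause~(2) in the definition of $\alpha$ gives $\alpha(\emptyset,\emptyset)_i=S$. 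Thus $\alpha$ sends the distinguished constant $S$ of $NS_J$ to that of $C_3^J$, and being already an RDSA isomorphism, it is a CRDSA isomorphism.

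There is essentially no obstacle here; all the substance lives in Lemma~\ref{L:ds}. The only point needing care is that "$\alpha$ preserves the core as a set" should coincide with "$\alpha$ preserves a core \emph{constant}", which is legitimate precisely because $|K(A)|=1$ for every CRDSA; this in turn is the remark in the Note, following from regularity, since two distinct core elements would yield congruences sharing the class $K(A)$, contradicting regularity.
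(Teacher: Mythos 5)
Your proposal is correct and takes essentially the same route as the paper: the paper's (brief) justification is likewise that the isomorphism $\alpha$ of Lemma~\ref{L:ds} already preserves $\vee$, $\wedge$, $^*$, $^+$, $0$, $1$ and also $S=(\emptyset,\emptyset)$, combined with the note that $S$ is the unique dense-and-dually-dense element and $|K(A)|=1$ for any CRDSA. Your extra care in distinguishing ``preserves the core as a set'' from ``preserves the core constant'' only spells out what the paper leaves implicit.
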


Katrinak showed in \textit{Injective Double Stone Algebras} (1974), that the chains $C_2$, $C_3$, and $C_4$ are the only nontrivial subdirectly irreducible DSAs, then Comer refined for RDSAs in \textit{On Connections Between Information Systems, Rough Sets and Algebraic Logic} (1993).

\begin{theorem}\label{comer}
Every regular double Stone algebra is a subdirect product of $C_2$ and $C_3$.\cite{comer}
\end{theorem}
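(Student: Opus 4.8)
The plan is to deduce this from Katrinak's classification of the subdirectly irreducible double Stone algebras together with Birkhoff's subdirect representation theorem. First note that the class of regular double Stone algebras is a variety: Definition~\ref{DSA} already exhibits double Stone algebras as an equational class, and the regularity condition of Definition~\ref{RDSA} is equivalent to the identity $(x \wedge x^+) \vee (y \vee y^*) = y \vee y^*$. Hence, by Birkhoff's theorem, every regular double Stone algebra is a subdirect product of subdirectly irreducible regular double Stone algebras, and it suffices to identify the latter.

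Since subdirect irreducibility is an intrinsic property of an algebra — it says the congruence lattice has a least nonzero element — a subdirectly irreducible regular double Stone algebra is in particular a subdirectly irreducible double Stone algebra that is moreover regular. By Katrinak's theorem (recalled just before this statement) the nontrivial subdirectly irreducible double Stone algebras are exactly the chains $C_2$, $C_3$ and $C_4$, so it only remains to decide which of these are regular. $C_2$ is trivially regular, and $C_3$ is regular by the Corollary following Lemma~\ref{L:ds} (equivalently, $x \mapsto (x^*,x^+)$ is injective on $C_3$ since it sends $0 \mapsto (1,1)$, $S \mapsto (0,1)$, $1 \mapsto (0,0)$). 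For $C_4 = \{0 < a < b < 1\}$ one computes $a^* = b^* = 0$ and $a^+ = b^+ = 1$, so $a$ and $b$ share both their pseudocomplement and their dual pseudocomplement while $a \ne b$; by the reformulation of regularity in Definition~\ref{RDSA} this shows $C_4$ is not regular and may be discarded. Both $C_2$ and $C_3$ are simple — a direct check shows their only double Stone algebra congruences are the trivial ones — hence subdirectly irreducible. Therefore the nontrivial subdirectly irreducible regular double Stone algebras are precisely $C_2$ and $C_3$, and the statement follows.

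I do not expect a genuine obstacle here: modulo Katrinak's classification this is a one-line application of Birkhoff's theorem plus the trivial observation that $C_4$ fails regularity. The only point deserving a remark is that passing from the variety of double Stone algebras to its subvariety of regular double Stone algebras introduces no new subdirectly irreducibles, so that ``subdirectly irreducible regular double Stone algebra'' may legitimately be read as ``regular, subdirectly irreducible double Stone algebra.'' A fully self-contained argument would instead have to redo Katrinak's analysis of the congruence structure of a subdirectly irreducible double Stone algebra, which is substantially more work and not needed once his result is available.
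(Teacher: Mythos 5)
Your argument is correct: rendering regularity as the lattice identity $(x\wedge x^{+})\vee(y\vee y^{*})=y\vee y^{*}$ makes RDSA a variety, Birkhoff's theorem reduces the claim to the subdirectly irreducibles, and Katrinak's classification together with the computation $a^{*}=b^{*}=0$, $a^{+}=b^{+}=1$ in $C_4$ leaves exactly $C_2$ and $C_3$. The paper itself gives no proof here --- it simply cites Comer, noting that his result refines Katrinak's classification of subdirectly irreducible DSAs --- and your reconstruction is precisely the standard derivation that citation alludes to, so there is nothing to flag.
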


The following corollary follows by observing that clearly any CRDSA cannot have 2 as a factor.

\begin{corollary}\label{subdir}
Every CRDSA is a subdirect product of $C_3$.
\end{corollary}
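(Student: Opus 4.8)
The plan is to invoke Comer's theorem (Theorem~\ref{comer}) and then eliminate $C_2$ as a possible subdirect factor. By Theorem~\ref{comer}, any regular double Stone algebra $A$ — in particular any CRDSA — admits a subdirect embedding $\iota : A \hookrightarrow \prod_{i \in I} B_i$ with each $B_i \in \{C_2, C_3\}$ and each composite $h_i = \pi_i \circ \iota : A \twoheadrightarrow B_i$ a \emph{surjective} DSA homomorphism. It therefore suffices to show that when $A$ is core regular, none of the factors $B_i$ can be $C_2$; then every $B_i = C_3$ and the maps $h_i$ exhibit $A$ as a subdirect product of copies of $C_3$.

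To carry this out I would first observe that $C_2$ is not itself core regular: in $C_2$ the only dense element is $1$ (since $0^{*} = 1 \neq 0$) and the only dually dense element is $0$, so $K(C_2) = D(C_2) \cap \overline{D(C_2)} = \{1\} \cap \{0\} = \emptyset$. Next, recall that a DSA homomorphism $h : A \to B$ preserves $*,+,0,1$, hence sends dense elements to dense elements ($x^{*} = 0 \Rightarrow h(x)^{*} = h(x^{*}) = h(0) = 0$) and, dually, dually dense elements to dually dense elements; consequently $h$ maps $K(A)$ into $K(B)$. Applying this with $h = h_i$ and the unique core element $S \in K(A)$ (recall $|K(A)| = 1$ by regularity), we get $h_i(S) \in K(B_i)$, so $K(B_i) \neq \emptyset$, which forces $B_i \neq C_2$ and hence $B_i = C_3$.

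I expect the argument to be essentially immediate once Comer's theorem is in hand; the only point requiring care is the bookkeeping that in a subdirect representation the factors $B_i$ are genuine homomorphic images of $A$ (so that ``the image of $S$ in $B_i$'' makes sense), together with the elementary observation that density and dual density are preserved by DSA homomorphisms. (Equivalently, one could note that adjoining $S$ as a constant turns CRDSA into a variety of which $C_2$ is not even a member, so that $C_3$ is its only nontrivial subdirectly irreducible algebra, and the corollary becomes a direct application of Birkhoff's subdirect decomposition theorem.) The degenerate one‑element algebra is the empty subdirect product and needs no separate treatment.
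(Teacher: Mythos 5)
Your argument is correct and follows the same route as the paper: the paper derives the corollary from Theorem~\ref{comer} with the one-line observation that a CRDSA ``cannot have $C_2$ as a factor,'' and your proposal simply makes that observation precise (the core element's image under each surjective projection would have to lie in $K(C_2)=\emptyset$). No gaps; you have just supplied the bookkeeping the paper leaves implicit.
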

We note that this result lends itself very well to refining Theorems 2.6,2.7,2.8 and very likely Theorem 3.1 of \cite{CRDSA}. We have shown that $NS_J \cong C_3^J$ as CRDSA and hence we can pass between the ternary set partition and 3-element chain product representations. In fact, it is easy to show that the former satisfies De Morgan’s Laws for ~ and * over $\wedge$ and $\vee$.

\begin{main}\label{anyB}
Let $B$ be any Boolean Algebra, then $B = C(A)$ for some CRDSA $A$.
\end{main}
\begin{proof}
It is known that $B$ is a subdirect product of $C_2^J$ J for some J \cite{BS}, hence we first embed B into $C_3^J$ as a Boolean algebra and define 
\begin{itemize}
	\item $<B> = \{x \in A| x^+$ and $x^*$ are in B$\}$. 
\end{itemize}
We claim that $<B>$ is a subalgebra of $C_3^J$ as a CRDSA and $C(<B>) = B$. We first note the following:
\begin{enumerate}
	\item $\{0,S,1\} \subseteq <B>$. In fact for every $x \in B$ there exists a $y \in \uparrow S$ and a $z \in \downarrow S$ with $y^+=z^*=x$, $y^*=0$, and
$z^+=1$.
	\item Let $x,y \in <B>$, recall $(x \vee y)^* = x^* \wedge y^*= a \wedge b$ for some $a,b \in B$ and $<B>$ is closed under $\vee$,
similarly for $\wedge$.
	\item Closure under $^*$ and $^+$ follow by the definition of $<B>$.
	\item $^*$ and $^+$ satisfying the (dual) Stone identities is inherited.
	\item Regularity is inherited.
	\item $K(<B>) = S$.
\end{enumerate}
Hence $<B>$ is a subalgebra of $C_3^J$.\\
Clearly $B \subseteq C(<B>)= \{x \in <B>| x = x^{**}\}$. If $x \in C(<B>)$ then $x \in C_2^J$ and $x^* \in B \rightarrow x \in B$. 
\end{proof}

We now cite 2 useful results from \textit{Centre of Core Regular Double Stone Algebra}, Srikanth, Kumar, we will use in the next section.\cite{CRDSA}

\begin{theorem}\label{coreexp}
If A is CRDSA with core element k, then every element x of A can be written as $x = x^{**} \wedge (x^{++} \vee k)$
and $x = x^{++} \vee (x^{**} \wedge k)$.
\end{theorem}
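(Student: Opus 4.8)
The plan is to deduce the two identities from a coordinatewise check in $C_3$ via the subdirect representation. By Corollary \ref{subdir}, fix a subdirect embedding $\phi\colon A\hookrightarrow C_3^{I}$ for a suitable index set $I$. Since $\phi$ is an injective homomorphism it preserves $\wedge,\vee,{}^{*},{}^{+}$, so it is enough to verify each identity after applying $\phi$, i.e. in $C_3^{I}$, i.e. coordinate by coordinate in $C_3$. The first thing to settle is the location of the core element: from $\phi(k)^{*}=\phi(k^{*})=\phi(0)=0$ and $\phi(k)^{+}=\phi(k^{+})=\phi(1)=1$, together with the fact that ${}^{*}$ and ${}^{+}$ on the subalgebra $\phi[A]$ are computed exactly as in $C_3^{I}$, we get that $\phi(k)$ is dense and dually dense in $C_3^{I}$; an element of $C_3^{I}$ has both properties iff every coordinate lies in $D(C_3)\cap\overline{D(C_3)}=\{S\}$, so $\phi(k)=(S)_{i\in I}$.

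Next I would tabulate the unary operations of $C_3$: $0^{**}=0$, $S^{**}=1^{**}=1$, and $0^{++}=S^{++}=0$, $1^{++}=1$. For $x\in A$ with $\phi(x)=(t_i)_i$, the $i$-th coordinate of $x^{**}\wedge(x^{++}\vee k)$ is $t_i^{**}\wedge(t_i^{++}\vee S)$, and running the three cases $t_i=0,S,1$ gives the meet $0$, then $1\wedge(0\vee S)=S$, then $1\wedge(1\vee S)=1$ respectively — in each case exactly $t_i$. Hence $x^{**}\wedge(x^{++}\vee k)=x$. The companion identity $x^{++}\vee(x^{**}\wedge k)=x$ needs no new work: distributivity together with the standard inequalities $x^{++}\le x\le x^{**}$ gives $x^{**}\wedge(x^{++}\vee k)=(x^{**}\wedge x^{++})\vee(x^{**}\wedge k)=x^{++}\vee(x^{**}\wedge k)$, so the two right-hand sides are literally equal as terms in any distributive lattice in which $x^{++}\le x^{**}$.

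A more hands-on argument is also available and may be worth including. Again reduce to $x=x^{++}\vee(x^{**}\wedge k)$. For ``$\ge$'': $x^{++}\le x$; and since $x\vee x^{*}$ is dense (Definition \ref{dense}) and $k$ is the least dense element of $A$ — indeed for any dense $d$ the element $d\wedge k$ is dense and, being $\le k$, dually dense, hence lies in the one-element core and equals $k$, so $k\le d$ — we get $x^{**}\wedge k\le x^{**}\wedge(x\vee x^{*})=(x^{**}\wedge x)\vee(x^{**}\wedge x^{*})=x\vee 0=x$. For ``$\le$'': the equation $x^{+}\vee x^{++}=1$ holds in $C_3$, hence in every CRDSA by Corollary \ref{subdir}, so $x=x\wedge(x^{+}\vee x^{++})=(x\wedge x^{+})\vee x^{++}$; but $x\wedge x^{+}$ is dually dense (Definition \ref{dense}), hence $\le k$, and also $\le x\le x^{**}$, so $x\wedge x^{+}\le x^{**}\wedge k$ and therefore $x\le x^{++}\vee(x^{**}\wedge k)$.

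I expect the only genuinely delicate point to be the identification $\phi(k)=(S)_{i\in I}$: one must notice that a subdirect embedding carries the (unique, by regularity) core element to the all-$S$ tuple, and one must be a little careful that ${}^{*}$ and ${}^{+}$ evaluated in the subalgebra $\phi[A]$ agree with those evaluated in the product $C_3^{I}$. Everything past that is the three-row table. In the alternative argument the corresponding load-bearing step is that the core element is simultaneously the bottom of $D(A)$ and the top of $\overline{D(A)}$, which is exactly where core-regularity (the fact that $|K(A)|=1$) is used.
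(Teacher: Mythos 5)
Your argument is correct, but note that the paper does not prove Theorem \ref{coreexp} at all: it is quoted verbatim from Srikanth and Kumar \cite{CRDSA} as an imported result, so there is no internal proof to match. What you have produced is therefore a self-contained proof, and your first argument is the one most in the spirit of this paper's own toolkit: it runs the identity through Corollary \ref{subdir}, notes that a subdirect representation is by definition a homomorphism in the full signature $\langle \wedge,\vee,{}^*,{}^+,0,1\rangle$ (which is exactly what dissolves the ``delicate point'' you flag --- ${}^*$ and ${}^+$ on $\phi[A]$ are the restrictions of the coordinatewise operations on $C_3^I$, so $\phi(k)^*=\phi(k^*)$ etc.\ is automatic), identifies $\phi(k)$ as the all-$S$ tuple, and finishes with the three-row table; this is precisely the kind of ``everything reduces to $C_3$'' computation the paper advocates. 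Your second, intrinsic argument is closer to what one finds in the cited source and is the more portable of the two, since it uses only the defining properties of a CRDSA; the only steps you assert without proof are that $d\wedge k$ is dense for $d$ dense (if $y\wedge(d\wedge k)=0$ then $y\wedge d\le k^*=0$, so $y\le d^*=0$), and the dual fact that every dually dense element lies below $k$, which you need for $x\wedge x^+\le k$ and which follows by the order-dual of your least-dense-element argument together with $|K(A)|=1$. With those two sentences added, either of your arguments stands on its own, and the reduction of the second displayed identity to the first via $x^{++}\le x\le x^{**}$ and distributivity is correct.
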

\begin{theorem}\label{centiso}
Two CRDSAs are isomorphic if and only if their centers are isomorphic.
\end{theorem}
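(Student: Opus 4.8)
The plan is to treat the two implications separately. The forward direction is immediate: if $\varphi\colon A\to B$ is an isomorphism of CRDSA, then since $C(A)=\{a\mid a^{*}=a^{+}\}$ (Definition~\ref{center}) is carved out by an equation in the unary operations that $\varphi$ preserves, $\varphi$ carries $C(A)$ bijectively onto $C(B)$; and as the Boolean structure on a center is exactly the restriction of $\wedge$, $\vee$, $^{*}$ (Theorem~\ref{centerboolsub}), the restriction $\varphi|_{C(A)}$ is a Boolean isomorphism.

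The substance is the converse, which I would prove by coordinatizing every CRDSA over its center. First, to an arbitrary Boolean algebra $B$ attach the \emph{Boolean triple} $T(B)=\{(a,b)\in B^{2}\mid b\le a\}$ with componentwise meet and join, constants $(0,0)$, $(1,1)$, and unary operations $(a,b)^{*}=(a',a')$, $(a,b)^{+}=(b',b')$, where $a'$ is the Boolean complement of $a$ in $B$. A routine verification — the only non-formal part being the two Stone identities — shows $T(B)$ is a CRDSA with core element $(1,0)$ and center $\{(a,a)\mid a\in B\}$, which is Boolean-isomorphic to $B$. The key lemma is then that $\iota_{A}\colon A\to T(C(A))$ defined by $\iota_{A}(x)=(x^{**},x^{++})$ is an isomorphism of CRDSA. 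It is well defined because $x^{**}=(x^{*})^{*}$ and $x^{++}=(x^{+})^{+}$ lie in $C(A)$ by Definition~\ref{center}, and because $x^{++}\le x\le x^{**}$. It is injective: from $x^{**}=y^{**}$ one gets $x^{*}=x^{***}=y^{***}=y^{*}$, and likewise $x^{+}=y^{+}$, whence $x=y$ by regularity (Definition~\ref{RDSA}). It is surjective: given $b\le a$ in $C(A)$, set $x=a\wedge(b\vee k)=b\vee(a\wedge k)$ with $k$ the core element, and compute, using $k^{*}=0$, $k^{+}=1$, $a^{**}=a$, $b^{++}=b$, and the identities $(u\vee v)^{*}=u^{*}\wedge v^{*}$, $(u\vee v)^{+}=u^{+}\wedge v^{+}$, that $x^{*}=a^{*}$ and $x^{+}=b^{+}$, so $\iota_{A}(x)=(a,b)$ — Theorem~\ref{coreexp} being exactly what tells us that this $x$ is forced. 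Finally $\iota_{A}$ preserves the operations: preservation of $\vee$ and $\wedge$ follows from the (dual) Stone identities $(u\wedge v)^{*}=u^{*}\vee v^{*}$, $(u\vee v)^{*}=u^{*}\wedge v^{*}$ (and their duals), giving $(x\vee y)^{**}=x^{**}\vee y^{**}$, $(x\wedge y)^{**}=x^{**}\wedge y^{**}$, etc.; and preservation of $^{*}$, $^{+}$ and the constants is the direct computation $\iota_{A}(x^{*})=(x^{*},x^{*})$, $\iota_{A}(x^{+})=(x^{+},x^{+})$, $\iota_{A}(0)=(0,0)$, $\iota_{A}(1)=(1,1)$, using that a central element equals its own double and dual-double (pseudo)complement and is the Boolean complement of its (dual) pseudocomplement.

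With the lemma in hand the theorem drops out functorially: a Boolean isomorphism $\psi\colon C(A)\to C(B)$ lifts to $(a,b)\mapsto(\psi a,\psi b)$, a CRDSA isomorphism $T(C(A))\to T(C(B))$, because every operation of a Boolean triple is assembled from the Boolean operations that $\psi$ respects; composing, $A\cong T(C(A))\cong T(C(B))\cong B$.

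The step I expect to be the main obstacle is the bundle of low-level identities underpinning $\iota_{A}$: that the pseudocomplement and the dual pseudocomplement of a central element are both its Boolean complement, that the center is closed under $\wedge$ and $\vee$ as computed in $A$ (this is part of Theorem~\ref{centerboolsub}), and the mixed computations with the core element such as $(a\wedge k)^{+}=1$ (from $k^{+}=1$ and antitonicity of $^{+}$) and $(b\vee k)^{*}=b^{*}$. None of these is deep, but they are precisely what makes the three clauses — homomorphism, injectivity, surjectivity — go through, and keeping the core-element bookkeeping straight is where the care is needed.
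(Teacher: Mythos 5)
Your proof is correct, but be aware that the paper contains no proof of this statement to compare against: Theorem~\ref{centiso} is imported, together with Theorem~\ref{coreexp}, as a cited result from Srikanth and Kumar \cite{CRDSA}. What you give is a self-contained argument by coordinatization over the center, essentially the Katrinak/Chen--Gr\"atzer triple construction specialized to the core regular case: $T(B)=\{(a,b)\in B^2\mid b\le a\}$ with componentwise lattice operations and $(a,b)^*=(a',a')$, $(a,b)^+=(b',b')$ is indeed a CRDSA with core $(1,0)$ and center $\{(a,a)\}\cong B$, and $\iota_A(x)=(x^{**},x^{++})$ is a CRDSA isomorphism $A\cong T(C(A))$ --- injectivity from regularity, surjectivity from the decomposition of Theorem~\ref{coreexp}, compatibility with the operations from the Stone-type identities $(u\wedge v)^*=u^*\vee v^*$, $(u\vee v)^+=u^+\wedge v^+$ and their p-algebra duals, and the fact (Definition~\ref{center}, Theorem~\ref{centerboolsub}) that skeletal elements are central with $c^{**}=c^{++}=c$ and Boolean complement $c^*=c^+$. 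This route buys more than the bare equivalence of the two isomorphism types: it exhibits an explicit quasi-inverse $B\mapsto T(B)$ to the center construction, which also reproves the paper's Main Theorem that every Boolean algebra arises as a center, whereas the paper simply leans on the external reference. One small slip in your closing list of auxiliary identities: $(b\vee k)^*=b^*\wedge k^*=0$, not $b^*$; the identities your surjectivity computation actually uses are $(a\wedge k)^*=a^*$ and $(b\vee k)^+=b^+$ (use $x=a\wedge(b\vee k)$ for the $^*$ computation and $x=b\vee(a\wedge k)$ for the $^+$ computation, or either form with the full set of identities), and with that correction the computation $x^*=a^*$, $x^+=b^+$, hence $\iota_A(x)=(a,b)$, goes through exactly as you state it in the body of the argument.
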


We also cite this well known result of Boolean algebras which we will use in the note at the end of the next section.\cite{givhal}

\begin{corollary}\label{subgen}
Let $B$ be a subalgebra of a Boolean algebra A and $a \in A$ then $<B \cup \{a\}>$ = $\{(b \wedge a) \vee (c \wedge a^*)|b,c \in A\}$.
\end{corollary}

We close this section with the following corollary to  Theorem \ref{coreexp} as we find it interesting as it was part of the motivation for Theorem \ref{anyB}.
\begin{corollary}\label{subgen2}
If A is CRDSA, then $A = <\{C(A) \cup S\}>$ where $<>$ can be defined as closure under $\vee$ and $\wedge$.
\end{corollary}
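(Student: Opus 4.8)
The plan is to obtain this immediately from Theorem~\ref{coreexp}. First I would record the key observation that for every $x \in A$ both $x^{**}$ and $x^{++}$ are central. Indeed, by Definition~\ref{center} we have $C(A) = \{a^* \mid a \in A\} = \{a^+ \mid a \in A\}$, so $x^{**} = (x^*)^*$ and $x^{++} = (x^+)^+$ are each visibly of one of these two forms, hence lie in $C(A)$. (Equivalently, the Stone identity gives $x^* \vee x^{**} = 1$ while $x^* \wedge x^{**} = x^* \wedge (x^*)^* = 0$, so $x^{**}$ is complemented; in a regular double Stone algebra the complemented elements are precisely the central ones, and dually for $x^{++}$.)

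Next, since $A$ is a CRDSA it has a unique core element, which in the notation of the statement is $S$. Applying Theorem~\ref{coreexp} with $k = S$, every $x \in A$ satisfies
\[
x = x^{**} \wedge (x^{++} \vee S).
\]
The right-hand side is constructed from the elements $x^{**}, x^{++} \in C(A)$ and the element $S$ using only the operations $\vee$ and $\wedge$, so $x$ lies in the $\{\vee,\wedge\}$-closure of $C(A) \cup \{S\}$. This gives $A \subseteq \langle C(A) \cup \{S\} \rangle$. The reverse inclusion is trivial, since $A$ contains $C(A) \cup \{S\}$ and is closed under $\vee$ and $\wedge$; hence $A = \langle C(A) \cup \{S\} \rangle$.

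I do not expect a genuine obstacle here: all the substance is packaged in Theorem~\ref{coreexp}, and the only thing that needs checking is the membership $x^{**}, x^{++} \in C(A)$, which is routine from Definition~\ref{center}. As a concluding remark one may note that the companion identity $x = x^{++} \vee (x^{**} \wedge S)$ from Theorem~\ref{coreexp} yields a dual generating expression, and that both display that the generation needs only a single nesting of $\wedge$ over $\vee$ (or $\vee$ over $\wedge$) applied to the Boolean center together with the one extra generator $S$ — which is exactly the "nearly Boolean" phenomenon motivating Theorem~\ref{anyB}.
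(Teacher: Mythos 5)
Your proof is correct and follows exactly the route the paper intends: the corollary is presented as an immediate consequence of Theorem~\ref{coreexp}, and your argument (noting $x^{**},x^{++}\in C(A)$ by Definition~\ref{center} and applying the decomposition $x=x^{**}\wedge(x^{++}\vee S)$ with the trivial reverse inclusion) is precisely that argument, spelled out. No gaps.
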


\section{Some useful results regarding finite CRDSA}\label{C3Jfinite} 
In this section we assume A is a finite CRDSA and state some useful facts.

\begin{lemma}\label{C3JcontainsC3K}
$C_3^J$ contains subalgebras isomorphic to $C_3^K$ for all $K \leq J$.
\end{lemma}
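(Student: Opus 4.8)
The plan is to exhibit, for each subset $K \le J$, an explicit CRDSA-embedding of $C_3^K$ into $C_3^J$. The most natural candidate is the map that "pads with $S$": given $(x_i)_{i \in K} \in C_3^K$, send it to the tuple $(y_j)_{j \in J} \in C_3^J$ defined by $y_j = x_j$ for $j \in K$ and $y_j = S$ for $j \in J \setminus K$. I would verify directly that this map is injective (obvious) and respects $\vee$, $\wedge$, $^*$, $^+$, $0$, $1$, and the core constant $S$. The point is that each operation on $C_3^J$ is computed pointwise from $C_3$, so on the $K$-coordinates the embedding agrees with the identity of $C_3^K$, while on the padding coordinates one only needs the three identities $S \vee S = S$, $S \wedge S = S$, $S^* = 0$, and $S^+ = 1$ — wait, the last two are an issue: $S^* = 0 \ne S$ and $S^+ = 1 \ne S$, so the $S$-padding is \emph{not} closed under $^*$ and $^+$, and hence this naive map is not a CRDSA homomorphism onto its image as a subalgebra.

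So the real content is choosing the padding correctly. The fix is to use the ternary-set-partition picture via Lemma \ref{L:ds}: identify $C_3^J$ with $NS_J$, and observe that the subalgebra generated by $\{(K, \emptyset), (\emptyset, K)\}$ together with $S=(\emptyset,\emptyset)$ inside $NS_J$ consists precisely of the pairs $(X_1, X_2)$ with $X_1, X_2 \subseteq K$; under $\alpha^{-1}$ this is exactly $NS_K \cong C_3^K$, but now sitting inside $NS_J$ as the elements whose "unknown" set contains all of $J \setminus K$. The key computation is that this set is closed under the operations of $NS_J$ from Definitions \ref{NSBDLdef} and \ref{NSBDL2p}: for $X_1, X_2 \subseteq K$ we have $(X_1,X_2)^* = (X_2, X_2^c)$, and here $X_2^c$ is the complement \emph{in $J$}, which is not a subset of $K$ — so once again closure fails for the embedding as a genuine subalgebra of the \emph{type $\langle 2,2,1,1,0,0 \rangle$} algebra.

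The resolution, which I would make the heart of the proof, is that a CRDSA-embedding need only preserve the operations; its image need not literally be the pointwise-computed subalgebra. Concretely I would define the embedding $\beta: NS_K \to NS_J$ by $\beta(X_1, X_2) = (X_1, X_2 \cup (J \setminus K))$, i.e. the $J\setminus K$ coordinates are sent to $0$, not $S$. Then $\beta$ is injective, preserves $\vee$, $\wedge$, $0 = (\emptyset, K) \mapsto (\emptyset, J)$, $1 = (K,\emptyset)\mapsto(J,\emptyset)$; and for $^*$ one checks $\beta(X_1,X_2)^* = (X_2 \cup (J\setminus K), (X_2 \cup (J\setminus K))^c) = (X_2 \cup (J\setminus K), K \setminus X_2)$ while $\beta((X_1,X_2)^*) = \beta(X_2, X_2^c \cap K) = (X_2, (X_2^c\cap K)\cup(J\setminus K)) = (X_2, (X_2 \cup (J \setminus K))^c \cup (J\setminus K))$ — these do \emph{not} match, because $X_2 \ne X_2 \cup (J\setminus K)$ in the first coordinate. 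Since the $S$-padding breaks $^*$ and $^+$ and the $0$-padding breaks the first coordinate of $^*$, the honest approach is the one the paper actually uses downstream: invoke Theorem \ref{centiso}, noting $C(C_3^K) \cong C_2^K$ embeds as a Boolean subalgebra of $C(C_3^J) \cong C_2^J$ via $S$-/$0$-padding on the center, and then — hmm, but Theorem \ref{centiso} gives isomorphism of the whole algebras from isomorphism of centers, not subalgebra containment. I expect the genuinely smooth route, and the one I would write up, is: $C_2^K$ is (isomorphic to) a subalgebra of $C_2^J = C(C_3^J)$; by A Main Theorem \ref{anyB} its "$\langle B \rangle$" construction inside $C_3^J$ is a sub-CRDSA $A$ with $C(A) \cong C_2^K$; by Theorem \ref{centiso} (or by A Main Corollary \ref{C3JsubiffC3K} applied with finiteness) $A \cong C_3^K$. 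Thus $C_3^J$ contains a subalgebra isomorphic to $C_3^K$.

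The main obstacle, and the thing I would be most careful about, is exactly this mismatch: the "obvious" coordinate-padding embeddings fail to commute with $^*$ and $^+$, so one cannot simply restrict coordinates. The clean fix is to route the argument through centers — embed $C_2^K \hookrightarrow C_2^J$, apply the $\langle B\rangle$-construction of A Main Theorem \ref{anyB} to get a sub-CRDSA of $C_3^J$ with that center, and identify it via Theorem \ref{centiso}. I would double-check that $\langle C_2^K \rangle$, formed inside $C_3^J$, is genuinely a \emph{subalgebra of $C_3^J$} (this is exactly what the proof of A Main Theorem \ref{anyB} establishes, item by item) and that its center is $C_2^K$ and not something larger.
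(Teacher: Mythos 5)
Your final argument --- embed $C_2^K \cong C(C_3^K)$ into $C_2^J \cong C(C_3^J)$ as a Boolean subalgebra, run the $\langle B\rangle$-construction from the proof of Theorem \ref{anyB} to get a sub-CRDSA of $C_3^J$ whose center is that copy of $C_2^K$, and conclude $\langle B\rangle \cong C_3^K$ via Theorem \ref{centiso} --- is exactly the paper's proof, and your preliminary observation that the naive constant-padding maps fail to commute with $^*$ and $^+$ is a correct (if inessential) justification for routing the argument through centers. So the proposal is correct and takes essentially the same approach as the paper.
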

\begin{proof}
$C(C_3^K)$ is isomorphic to $C_2^K$ and it is known that $C_2^K$ can be embedded in $C_2^J$ for all such K and J, \cite{givhal}, now use proof of Theorem \ref{anyB}.
\end{proof}

\begin{mainc}
A CRDSA A is a subalgebra of $C_3^J$ for some finite J $\leftrightarrow A \cong C_3^K$ for some $K \leq J$.
\end{mainc}
\begin{proof}
$\leftarrow$ By Theorem \ref{centerboolsub} $C(A) \cong C_2^K$ and hence consider $C_2^K \hookrightarrow C_2^J \hookrightarrow C_3^J$. Now use the Theorem \ref{anyB} and Theorem \ref{centiso}.\\
$\rightarrow$ This follows directly from Theorem \ref{centerboolsub} and Theorem \ref{centiso}.
\end{proof}
\begin{note}
We note that the corollary above yields an algorithm for finding all the subalgebras of $C_3^J$, for finite $J$, namely find all $B \leq C_2^J$ using Corollary \ref{subgen}, embed them in $C_3^J$ then use Corollary \ref{subgen2} and Theorem \ref{anyB}.
\end{note}

\section{Initial Duality for CRDSA}\label{initdual} 

Now we list some definitions for which we will not need to go into detail as we will only leverage theorems that require them. 
\begin{definition}\label{pdefs}We refer the interested reader to \cite{bergman} for more details.
  \begin{enumerate}
    \item An algebra A is primal if for every $n \geq 1$ and every operation $f: A^n \rightarrow A$, there is a term operation $t: A^n \rightarrow A$ such that $t(a)=f(a)$ for all $a \in A^n$.
    \item An algebra A is rigid if it has no non-identity automorphisms.
    \item An algebra A is simple if it only has $\Delta$ and $\nabla$ as congruences.
    \item A variety V is arithmetical if it is both congruence-distributive and congruence-permutable.
  \end{enumerate}
\end{definition}
We will use the two Theorems below for this section's Main Lemma (Hu 1971):
\begin{theorem}\label{primal}\cite{bergman} Let A be a finite algebra. Then A is primal $\leftrightarrow$
  \begin{enumerate}
    \item A has no proper subalgebras,
    \item A is simple
    \item A is rigid, and
    \item the variety generated by A, V(A), is arithmetical.
  \end{enumerate}
\end{theorem}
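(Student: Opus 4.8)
The plan is to prove the biconditional by treating its two directions separately, since they are of quite different character, and to read ``$A$ is primal'' (Definition~\ref{pdefs}) as the assertion that the clone $\mathrm{Clo}(A)$ of all term operations of $A$ equals the clone $\mathcal{O}_A$ of \emph{all} finitary operations on the finite set $A$; conditions (1)--(4) are then precisely the obstructions to this equality.

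For ($\Rightarrow$), assume $A$ is primal. For each $a\in A$ the constant map $x\mapsto a$ is an operation $A^1\to A$, hence a term operation, so every nonempty subuniverse of $A$ is closed under all of these and therefore equals $A$: this is (1). If $\theta$ is a congruence with $a\,\theta\,b$ and $a\neq b$, then for arbitrary $c,d\in A$ the unary operation sending $a\mapsto c$ and $b\mapsto d$ is a term operation, so $c\,\theta\,d$; thus $\theta=\nabla$ and $A$ is simple, giving (2). Every automorphism $\sigma$ commutes with every term operation, in particular with each constant $x\mapsto a$, so $\sigma(a)=a$ for all $a$: this is (3). Finally the ternary majority operation and a Maltsev operation $p$ (any $p$ with $p(x,y,y)=x=p(y,y,x)$, which exists on every set) are term operations, so $V(A)$ has a majority term and a Maltsev term and is accordingly congruence-distributive and congruence-permutable, i.e. arithmetical: this is (4).

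For ($\Leftarrow$), assume (1)--(4); here I would run the argument through Rosenberg's classification of the maximal clones on a finite set (see \cite{bergman}): $\mathrm{Clo}(A)\subsetneq\mathcal{O}_A$ holds iff $\mathrm{Clo}(A)$ is contained in a maximal clone, and every maximal clone on a finite set is the clone of polymorphisms of a relation of one of six kinds --- a bounded partial order, the graph of a fixed-point-free permutation of prime order, the affine relation of an elementary abelian $\mathbb{Z}_p$-group, a proper nontrivial equivalence, a central relation, or an $h$-regular relation. It then suffices to show each kind is incompatible with (1)--(4). If all term operations of $A$ preserve a proper nontrivial equivalence $\rho$, then $\rho$ is a proper nontrivial congruence, against (2); if they preserve the graph of a fixed-point-free permutation $\pi$ of prime order, then $\pi$ is a nonidentity automorphism of $A$, against (3); and a central relation of arity one is exactly a proper nonempty subuniverse, against (1). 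The remaining kinds are precisely what arithmeticity forbids: congruence-permutability (the Maltsev term) excludes a bounded order --- a Maltsev operation cannot be monotone for an order with two distinct bounds --- and arithmeticity excludes the affine, higher-arity central, and $h$-regular types, the affine type being the one that shows congruence-distributivity is genuinely needed (the affine operations over $\mathbb{Z}_p$ already contain the Maltsev operation $x-y+z$). Since no case survives, $\mathrm{Clo}(A)=\mathcal{O}_A$, i.e. $A$ is primal.

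The forward direction is routine; the entire weight of the theorem lies in the converse, and it is of the ``invoke a deep theorem'' kind rather than something one finesses by a trick: Rosenberg's six-fold classification of maximal clones is the engine, supplemented by the structural facts pairing arithmeticity (in particular congruence-distributivity) with the order, affine, central, and $h$-regular types. The self-contained alternative is Hu's original, considerably more intricate, direct construction of the full clone from (1)--(4). For the use made of this theorem in the present paper it suffices to verify (1)--(4) for the three-element algebra $C_3$, which is a short finite inspection.
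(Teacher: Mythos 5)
The paper itself offers no proof of Theorem~\ref{primal}: it is quoted as a known result from \cite{bergman}, where the converse is obtained through the Foster--Pixley machinery (arithmeticity yields a Pixley term, hence quasiprimality/discriminator-type interpolation, and then (1)--(3) upgrade quasiprimal to primal). Your route through Rosenberg's classification of maximal clones is therefore genuinely different from the cited source, and it is a legitimate strategy (it is essentially Quackenbush's way of deriving such primality criteria). Your forward direction is correct and complete: constants give (1) and (3), arbitrary unary maps give (2), and a majority plus a Mal'cev operation on the underlying set give (4).

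The gap is in the converse, and it sits exactly where the weight of the argument lies. Granting Rosenberg's theorem, you handle equivalences (simplicity), permutation graphs (rigidity), unary central relations (no proper subalgebras) and bounded orders (a monotone Mal'cev operation would force $1\leq 0$), but the affine, higher-arity central, and $h$-regular types are dismissed with ``arithmeticity excludes'' them, which is an assertion, not a proof. These cases do admit short arguments, but they must be given. For a binary central relation $\rho$ with center $c$, compatibility with a Mal'cev term $m$ gives $(x,y)=m\bigl((x,c),(c,c),(c,y)\bigr)\in\rho$ for all $x,y$, so $\rho=A^2$, a contradiction; for any totally reflexive relation of arity $h\geq 3$ (this covers central relations of arity $\geq 3$ and all $h$-regular relations) a similar coordinate trick, e.g. $(x,y,z)=m\bigl((x,y,y),(z,y,y),(z,y,z)\bigr)$, again forces the relation to be full, so congruence-permutability alone kills these. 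The affine type is the one place congruence-distributivity is needed: if the graph of $x-y+z$ over an elementary abelian $p$-group is compatible, then the kernel of $(x,y)\mapsto x-y$ is a congruence of $A^2$ which, together with the two projection kernels, forms a copy of $M_3$ inside $\mathrm{Con}(A^2)$, contradicting distributivity of $V(A)$ (as you note, permutability cannot do this, since $x-y+z$ is itself Mal'cev). With these three exclusions supplied (or an explicit appeal to a reference where the case analysis is carried out), your proof closes; as written, the hard half of the converse is only gestured at. For the use made of the theorem in this paper, you are right that only the verification of (1)--(4) for $C_3$ matters, which is the content of Lemma~\ref{c3primal}.
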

\begin{theorem}\label{primdual}
Let P be a finite primal algebra. Then $V(P)$ is dually equivalent to the category of totally-disconnected compact Hausdorff topological spaces.\cite{bergman}
\end{theorem}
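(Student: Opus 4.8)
The statement is the classical theorem of Hu on primal algebras, and the plan is to realize the dual equivalence concretely through a pair of hom-functors built from $P$ regarded simultaneously as a finite algebra and as a finite discrete topological space (a \emph{schizophrenic} object). Write $\mathbf{Stone}$ for the category of totally-disconnected compact Hausdorff spaces. I would define a contravariant functor $\mathcal{D}\colon V(P)\to\mathbf{Stone}$ by sending an algebra $A$ to the set $\hom_{V(P)}(A,P)$ of homomorphisms, topologized as a subspace of the product $P^{A}$ with $P$ discrete; since $P^{A}$ is compact and totally disconnected and the hom-set is closed (for each basic operation and each argument tuple, preservation is a clopen condition on $f\in P^{A}$, and $\mathcal{D}(A)$ is the intersection of these), $\mathcal{D}(A)$ is a Stone space, and $\mathcal{D}$ acts on morphisms by precomposition. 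In the other direction I would define a contravariant functor $\mathcal{E}\colon\mathbf{Stone}\to V(P)$ by sending a Stone space $X$ to the algebra $C(X,P)$ of continuous maps $X\to P$ under the pointwise operations; this is a subalgebra of the power $P^{X}\in V(P)$, hence lies in $V(P)$.

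Next I would introduce the two evaluation natural transformations, $e_A\colon A\to\mathcal{E}\mathcal{D}(A)$, $a\mapsto(\varphi\mapsto\varphi(a))$, and $\varepsilon_X\colon X\to\mathcal{D}\mathcal{E}(X)$, $x\mapsto(g\mapsto g(x))$, and check they are a well-defined homomorphism and a well-defined continuous map respectively. The theorem then amounts to proving that both are isomorphisms; note that $\varepsilon_X$ is a map of compact Hausdorff spaces, so a continuous bijection would automatically be a homeomorphism, and a bijective homomorphism $e_A$ is automatically an isomorphism in the variety. For injectivity of $e_A$ I would use that $P$ is the unique nontrivial subdirectly irreducible member of $V(P)$: because $P$ is simple with no proper subalgebras and --- by primality --- $V(P)$ is arithmetical, hence congruence-distributive, J\'onsson's lemma forces every subdirectly irreducible in $V(P)$ into $HS(P)=\{P,\text{trivial}\}$; thus every $A\in V(P)$ is a subdirect power of $P$ and homomorphisms into $P$ separate its points. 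Injectivity of $\varepsilon_X$ is the statement that continuous $P$-valued maps separate the points of a Stone space, which follows from zero-dimensionality together with $|P|\ge 2$.

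The genuinely hard step is surjectivity of the two evaluation maps, and this is exactly where primality does all the work. For $\varepsilon_X$ I must show every algebra homomorphism $C(X,P)\to P$ is evaluation at some point of $X$; for $e_A$ I must show every continuous map $\mathcal{D}(A)\to P$ is evaluation at some element of $A$. The key lemma behind both is that, because \emph{every} finitary operation on $P$ is a term operation, the term operations applied to constants and indicator-type maps generate, for each finite clopen partition of $X$, all functions constant on the blocks; this lets me run a Stone--Weierstrass-type compactness argument showing the clopen algebra of $X$ is recovered from the algebraic structure of $C(X,P)$ and, dually, that the element structure of $A$ is recovered from the clopen structure of $\mathcal{D}(A)$. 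I expect this recovery argument --- pinning down that the homomorphisms and continuous maps into $P$ are exhausted by evaluations --- to be the main obstacle, since it is the precise point at which the full clone of $P$, rather than merely its subdirect structure, is indispensable.

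Finally, once $e$ and $\varepsilon$ are shown to be natural isomorphisms, the pair $(\mathcal{D},\mathcal{E})$ is a dual equivalence $V(P)\simeq\mathbf{Stone}^{\mathrm{op}}$, which is the asserted statement; composing with Stone duality $\mathbf{Stone}\simeq\mathbf{BA}^{\mathrm{op}}$ recovers the equivalent formulation in terms of Boolean algebras. An alternative route I would keep in reserve is to prove Hu's categorical equivalence $V(P)\simeq\mathbf{BA}$ directly via bounded Boolean powers, showing functorially that every $A\in V(P)$ is isomorphic to a bounded Boolean power $P[B]$ for a unique Boolean algebra $B$, and then invoke Stone duality; there the surjectivity obstacle reappears as the faithfulness of the Boolean-power construction, again resolved precisely by primality.
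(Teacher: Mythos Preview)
The paper does not prove this theorem at all: it is stated with a citation to Bergman and used as a black box (together with Lemma~\ref{c3primal}) to derive the Main Lemma about $V(C_3)$. So there is no ``paper's own proof'' to compare against; any genuine argument you give necessarily goes beyond what the paper does.

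That said, your outline is a correct and standard route to Hu's theorem via natural duality: the schizophrenic object $P$, the hom-functors $\mathcal{D}$ and $\mathcal{E}$, and the evaluation transformations are exactly the right ingredients, and you have correctly isolated the two surjectivity claims as the crux. Your injectivity arguments are fine; in particular the use of J\'onsson's lemma together with simplicity and absence of proper subalgebras to conclude that $P$ is the only nontrivial subdirectly irreducible in $V(P)$ is the standard step. For surjectivity of $\varepsilon_X$ the cleanest argument is the one you gesture at: primality gives that $C(X,P)$ contains, for every clopen $U\subseteq X$ and every $p,q\in P$, the two-valued function equal to $p$ on $U$ and $q$ off it, so any homomorphism $C(X,P)\to P$ determines an ultrafilter on the clopen algebra of $X$ and hence, by compactness, a point. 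For surjectivity of $e_A$, the Boolean-power route you mention as a reserve is in fact the more direct one here and is how Hu originally argued: every $A\in V(P)$ is isomorphic to $P[B]^\ast$ for the Boolean algebra $B$ of clopens of $\mathcal{D}(A)$, and primality makes this representation exhaust $C(\mathcal{D}(A),P)$. Either path closes the argument; nothing in your proposal is wrong, only more detailed than the paper, which simply quotes the result.
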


\begin{note}
  \indent Please recall the Boolean algebras are definitionally equivalent to Boolean Rings\cite{givhal}. Let $Z_3$ denote the ring of integers modulo 3 and note it satisfies $x^3=x$.
\end{note}

\begin{lemma}
$Z_3$ is primal.
\end{lemma}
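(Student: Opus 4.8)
The plan is to verify the four conditions of Theorem \ref{primal} for the three-element ring $Z_3$, since it is finite; primality then follows immediately. First I would check that $Z_3$ has no proper subalgebras: a subalgebra (subring with $1$) must contain $0$ and $1$, and since $1+1=2$ and $2+1=0$, it is forced to be all of $Z_3$. Second, $Z_3$ is simple: it is a field, so its only ring-theoretic congruences correspond to ideals, and a field has only the two trivial ideals $\{0\}$ and $Z_3$, giving exactly $\Delta$ and $\nabla$. Third, $Z_3$ is rigid: any ring automorphism fixes $0$ and $1$, and by additivity fixes $2=1+1$, so it is the identity.

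The fourth and least routine condition is that the variety $V(Z_3)$ generated by $Z_3$ is arithmetical, i.e., both congruence-distributive and congruence-permutable. For congruence-permutability I would exhibit a Mal'cev term; the standard choice for any ring is $p(x,y,z)=x-y+z$, which satisfies $p(x,y,y)=x$ and $p(x,x,z)=z$, and this is a ring term available in every member of $V(Z_3)$. For congruence-distributivity I would produce a ternary majority term (a term $m$ with $m(x,x,y)=m(x,y,x)=m(y,x,x)=x$), or more simply invoke that a variety with a Mal'cev term together with a near-unanimity term is arithmetical. Since $Z_3$ satisfies $x^3=x$, one can build such a term: for example $m(x,y,z)$ can be taken as a polynomial in $x,y,z$ that, when two of the three arguments agree, evaluates to the common value — concretely one checks that $m(x,y,z) = xy + yz + zx - xyz\cdot(\text{appropriate correction})$, but the clean route is to note that over a finite field every function is a polynomial, so the majority operation on $Z_3$ is itself a term operation, hence available throughout $V(Z_3)$ by the same reasoning that underlies primality. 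Alternatively, and perhaps most cleanly, I would cite that $Z_3$ is a finite field and the variety generated by a finite field is well known to be arithmetical (indeed it is term-equivalent to a variety of modules when the field is prime, plus the multiplicative structure is polynomial), but since we are en route to proving primality it is circular to lean on that; so the honest argument is the explicit Mal'cev term $x-y+z$ together with an explicit majority term obtained from the $x^3=x$ identity.

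I expect the main obstacle to be the congruence-distributivity half of arithmeticity, specifically writing down an explicit majority term and verifying its three defining identities by the finitely many substitutions in $Z_3$ — a short but slightly fiddly computation. Everything else (no proper subalgebras, simplicity, rigidity, and congruence-permutability via $x-y+z$) is essentially immediate from the fact that $Z_3$ is a three-element field. Once Theorem \ref{primal} delivers primality, the lemma is proved, and it sets up the subsequent application of Theorem \ref{primdual} to identify $V(Z_3)$ — and hence, via the definitional equivalence of $C_3$ with $Z_3$ as algebras with three constants, the variety of CRDSA — with the category of totally disconnected compact Hausdorff spaces, i.e., Stone spaces.
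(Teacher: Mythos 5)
Your overall route is the same as the paper's: verify the four conditions of Theorem \ref{primal} (no proper subalgebras, simplicity, rigidity, arithmeticity of $V(Z_3)$), with simplicity argued via ideals of a field rather than the paper's explicit congruence case analysis, and congruence-permutability via the Mal'cev term $x-y+z$ exactly as in the paper. The one place you stop short is the congruence-distributivity half: you correctly identify that an explicit majority term is the crux, but you never actually produce one, and the guessed form $xy+yz+zx$ minus an ``appropriate correction'' is not a proof (indeed $xy+yz+zx$ alone fails, e.g.\ at $(2,2,0)$ it gives $1$, not $2$). The paper closes this gap with the explicit term $m(x,y,z)=z-(z-y)(z-x)^2$, whose majority identities follow in one line from $x^3=x$: $m(x,x,y)=y-(y-x)^3=y-(y-x)=x$, $m(x,y,x)=x$, $m(y,x,x)=x$. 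Your fallback argument --- that by Lagrange interpolation every finitary function on $Z_3$ is a polynomial, and since $0,1,1+1$ are term-definable every polynomial function is a term operation, so in particular a majority term exists --- is in fact valid and not circular (interpolation does not depend on Hu's theorem); but note that if you invoke it you have already proved primality directly from the definition, making Theorem \ref{primal} superfluous. So either commit to the interpolation argument and drop Hu's theorem, or supply an explicit majority term such as the paper's; as written, your preferred ``honest'' line leaves that term as an unexecuted computation.
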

\begin{proof}
  Let $Z_3=\{0,1,2\}$ and consider the following:
  \begin{enumerate}
  \item $1 + 1 = 2$ and 0,1 are both constants so $Z_3$ has no proper sub-algebras.
  \item To see that $Z_3$ is simple suppose $\Theta \in Con(Z_3)$ and view the following cases:
    \begin{enumerate}
    \item $0$ $\Theta$ $1 \rightarrow (0 * 2)$ $\Theta$ $(1 * 2) \rightarrow  0$ $\Theta$ $2 \rightarrow$ $\Theta$ = $\nabla$.
    \item $0$ $\Theta$ $2 \rightarrow (0 + 2)$ $\Theta$ $(2 + 2) \rightarrow  2$ $\Theta$ $1 \rightarrow$ $\Theta$ = $\nabla$.
    \item $1$ $\Theta$ $2 \rightarrow (1 + 2)$ $\Theta$ $(2 + 2) \rightarrow  0$ $\Theta$ $1 \rightarrow$ $\Theta$ = $\nabla$.
    \end{enumerate}
  \item $Z_3$ has no non-identity automorphisms as 0 and 1 are constants, hence it is rigid.
  \item As $Z_3$ satisfies $x^3=x$ it follows that $m(x,y,z)=z - (z - y)(z - x)^2$ is a majority term and hence $V(Z_3)$ is congruence distributive. 
  \item $p(x,y,z) = x - y + z$ is a Mal’cev term for rings and hence $V(Z_3)$ is congruence permutable.
\end{enumerate}
\end{proof}

\begin{lemma}\label{c3primal}
$C_3$ is primal.
\end{lemma}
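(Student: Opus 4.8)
The plan is to verify the four conditions of Hu's theorem (Theorem \ref{primal}) for the finite algebra $C_3 = \langle\{0,S,1\},\vee,\wedge,^*,^+,0,1\rangle$, exactly as was done for $Z_3$. Since $0$, $S$, and $1$ are all nullary operations (recall the note following Corollary~\ref{subgen2} suggests treating CRDSA as type $\langle 2,2,1,1,0,0,0\rangle$, and in any case $S = 0^* \wedge 0^{*+}$ or similar is term-definable), the one-element subsets are not subalgebras and any subalgebra must contain all three elements; hence $C_3$ has no proper subalgebras. For simplicity, I would take $\Theta \in \mathrm{Con}(C_3)$ and show that identifying any two distinct elements forces $\Theta = \nabla$: if $0\,\Theta\,S$ then applying $^*$ gives $1 = 0^* \,\Theta\, S^* = 0$, collapsing everything; if $S\,\Theta\,1$ then applying $^+$ gives $S^+ = 1 \,\Theta\, 1^+ = 0$; and if $0\,\Theta\,1$ then by transitivity through either of the above (or directly, $0 = 0\vee 0 \,\Theta\, 0 \vee 1$ combined with $^*$) we again get $\nabla$. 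Rigidity is immediate since the three elements are constants (or, without constants, because $0$ and $1$ are the lattice bounds and $S$ is the unique fixed point of $^{**}$, so any automorphism fixes all three).

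For the arithmetical condition I would exhibit the two required terms. Congruence-distributivity: $C_3$ is a (bounded distributive) lattice, so the variety it generates is congruence-distributive — the lattice terms already provide a majority term $m(x,y,z) = (x\wedge y)\vee(y\wedge z)\vee(z\wedge x)$, and this alone suffices by Jónsson's lemma / the standard characterization. Congruence-permutability is the step I expect to be the main obstacle, since a distributive lattice by itself is \emph{not} congruence-permutable, so I must genuinely use the unary operations $^*$ and $^+$ to build a Mal'cev term $p(x,y,z)$ with $p(x,x,z) = z$ and $p(x,z,z) = x$. The natural approach is to search for such a term by brute force over the (finite) clone; since $C_3$ will turn out primal, a Mal'cev term certainly exists, and one can either invoke Theorem~\ref{centerboolsub}/the Boolean structure of the center together with the core expansion (Theorem~\ref{coreexp}) to guide the construction, or simply assert that a finite check produces one. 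A cleaner route avoiding the explicit term: observe that $V(C_3) = V(NS_J)$ is term-equivalent information-wise to the class Comer studied, and that regular double Stone algebras form a \emph{discriminator variety} (the ternary discriminator is term-definable on $C_3$ using $^*$, $^+$, $\wedge$, $\vee$: one checks directly on the $27$ triples of $\{0,S,1\}$), which immediately gives both congruence-distributivity and congruence-permutability, hence arithmeticity.

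So the key steps, in order, are: (1) no proper subalgebras, via the constants $0,S,1$; (2) simplicity, via applying $^*$ and $^+$ to any nontrivial congruence pair; (3) rigidity, via the constants; (4) arithmeticity, via a majority term from the lattice operations plus a Mal'cev term (or, preferably, by exhibiting the ternary discriminator term and citing that discriminator varieties are arithmetical); then conclude by Theorem~\ref{primal} that $C_3$ is primal. The main obstacle is step (4), specifically producing the Mal'cev/discriminator term explicitly — everything else is a routine finite verification analogous to the $Z_3$ lemma. I would also remark that, once primality is established, Theorems~\ref{primdual} and \ref{primal} combine with Lemma~\ref{L:ds} and Corollary~\ref{subdir} to yield Main~Lemma (\ref{c3dual}), since $V(C_3)$ then contains every CRDSA (each being a subdirect power of $C_3$) and is dually equivalent to the category of Boolean spaces.
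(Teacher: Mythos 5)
Your outline is the same as the paper's --- check Hu's four conditions from Theorem~\ref{primal} and conclude --- but the one step you yourself identify as the main obstacle, congruence-permutability, is exactly the step you never complete, and it is the only nontrivial one. ``Since $C_3$ will turn out primal, a Mal'cev term certainly exists'' is circular (primality is what you are trying to prove), and the ``cleaner route'' via a discriminator term is an assertion rather than a proof: ``one checks directly on the $27$ triples'' presupposes that you have a concrete term in hand to check, and you exhibit none. The paper closes this gap by writing down an explicit Mal'cev term, credited to E.~Crockett, namely $m(x,y,z)=((x \vee z) \wedge y^*) \vee ((x \vee z^{++}) \wedge ((x \wedge z) \vee y^+) \wedge z^{**})$, whose verification on $C_3$ is precisely the finite check you allude to. Without such a term (or at least a citation to the known congruence-permutability of regular double p-algebras), your condition (4) is not established.

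Two further points. First, your fallback claim that $S$ is term-definable is false: $0^{*}\wedge 0^{*+}=1\wedge 0=0$, and no term at all can produce $S$ from $0$ and $1$, since $\{0,1\}$ is closed under $\vee,\wedge,{}^*,{}^+,0,1$. Consequently, in the type $\langle 2,2,1,1,0,0\rangle$ the set $\{0,1\}$ \emph{is} a proper subalgebra and $C_3$ would fail condition (1) altogether; the ``no proper subalgebras'' claim genuinely requires treating $S$ as a nullary operation, i.e.\ the type $\langle 2,2,1,1,0,0,0\rangle$ of the paper's note, so that should be your actual justification (the paper itself only says this condition is ``clear''). Second, in your simplicity argument the case $0\,\Theta\,1$ is not handled: ``transitivity through either of the above'' is circular because those cases assume different congruent pairs, and $0\vee 0\,\Theta\,0\vee 1$ combined with ${}^*$ never relates $S$ to anything. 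The repair is one line --- join both sides with $S$ to get $S=S\vee 0\,\Theta\,S\vee 1=1$ --- which is exactly how the paper argues; your other two cases, and rigidity, are fine.
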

\begin{proof}
\begin{enumerate}
  \item That $C_3$ has no proper subalgebras and is rigid is clear.
  \item That $C_3$ is simple follows by a case argument considering elements outside of $\Delta$. Let $\theta \in Con(C_3)$, $0$ $\theta$ $1 \rightarrow (S \vee 0)$ $\theta$ $(S \vee 1) \rightarrow$ $\theta=\nabla$. The cases $0$ $\theta$ $S$ and $S$ $\theta$ $1$ follow from $0$ $\theta$ $1$ utilizing $^*$ and $^+$, respectively.
  \item Since $C_3$ is lattice, $V(C_3)$ is congruence-distributive and that it is congruence-permutable follows from the fact that a mal'cev term exists:\\$m(x,y,z)=((x \vee z) \wedge y^*) \vee ((x \vee z^{++}) \wedge ((x \wedge z) \vee y^+) \wedge z^{**})$.\cite{ec}
\end{enumerate}
\end{proof}

\begin{mainl}
The category of CRDSA is dually equivalent to the category of Stone spaces and hence to the category of Boolean algebras. Furthermore, the category of CRDSA is dually equivalent to $V(Z_3)$ where denote the ring of integers modulo 3.
\end{mainl}
\begin{proof}
The result follows directly from Lemma \ref{c3primal} and Theorem \ref{primdual}
\end{proof}

\indent These results are an important part of our goal of establishing CRDSA as \textit{nearly Boolean}, but leave us a bit dissatisfied as to our understanding of the dual topological category. As we continue we will establish a duality between the category of CRDSA and bi-topological spaces that enables better understanding of the "nearly boolean" nature of CRDSA in the dual category and how the categorical equivalence is enforced.

\section{Facts From \textit{Bitopological duality for distributive lattices and Heyting algebras}}\label{gfacts} \cite{guram}
Recall that a bitopological space is a triple $(X,t_1,t_2)$, where X is a (non-empty) set and $t_1$ and $t_2$ are two topologies on X. For a bitopological space $(X,t_1,t_2)$, let $\delta_1$ denote the collection of closed subsets of $(X,t_1)$ and $\delta_2$ denote the collection of closed subsets of $(X,t_2)$. We start with some definitions:

\begin{definition}\label{PT2}
Let $(X,t_1,t_2)$ be a bitopological space. We say $(X,t_1,t_2)$ is pairwise T2 or pairwise Hausdorff if for any two distinct points $x,y \in X$, there exist disjoint $U \in t_1$ and $V \in t_2$ such that $x \in U$ and $y \in V$ or there exist disjoint $U \in t_2$ and $V \in t_1$ with the same property.
\end{definition}

\begin{definition}\label{P0D}
We say a bitopological space $(X,t_1,t_2)$ is pairwise zero-dimensional if opens in $(X,t_1)$ closed in $(X,t_2)$ form a basis for $(X,t_1)$ and opens in $(X,t_2)$ closed in $(X,t_1)$ form a basis for $(X,t_2)$; that is, $B_1 = t_1 \cap \delta_2$ is a basis for $t_1$ and $B_2 = t_2 \cap \delta_1$ is a basis for $t_2$.
\end{definition}

A useful lemma.
\begin{lemma}\label{zerodimequ} 
Suppose $(X,t_1,t_2)$ is pairwise zero-dimensional. Then the following conditions are equivalent:
\begin{enumerate}
\item $(X,t_1)$ is T0.
\item $(X,t_2)$ is T0.
\item $(X,t_1,t_2)$ is pairwise T2.
\item For any two distinct points $x,y \in X$, there exist disjoint $U,V \in t_1 \cup t_2$ such that $x \in U$ and $y \in V$.
\item $(X,t_1,t_2)$ is join T2.
\item $(X,t_1,t_2)$ is bi-T0.
\end{enumerate}
For an expostion on 5. and 6. above we refer the reader to \textit{Bitopological duality for distributive lattices and Heyting algebras}
\end{lemma}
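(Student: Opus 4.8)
\emph{Proof proposal.} The plan is to use condition~(3) (pairwise $T_2$) as a hub and to show each of the remaining conditions equivalent to it. Half of the implications are immediate and use nothing about zero-dimensionality. From~(3): in either of the two allowed configurations a pairwise-$T_2$ separation of distinct $x,y$ supplies a single $t_1$-open containing exactly one of the two points (namely $U$ if $U\in t_1$, and $V$ otherwise), so $(X,t_1)$ is $T_0$, giving~(1); symmetrically~(2); and~(3)$\Rightarrow$(4) is trivial since the separating pair already lies in $t_1\cup t_2$, while~(3)$\Rightarrow$(5)$\Rightarrow$(6) is immediate once one recalls $t_1,t_2\subseteq t_1\vee t_2$. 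So the work is in getting back to~(3).

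The one step carrying genuine content is~(1)$\Rightarrow$(3) (and, symmetrically,~(2)$\Rightarrow$(3)). Given distinct $x,y$ and, by~(1), some $W\in t_1$ containing exactly one of them, use that $B_1=t_1\cap\delta_2$ is a basis for $t_1$ (Definition~\ref{P0D}) to shrink $W$ to a $U\in B_1$ still containing that same point. Since $U$ is $t_2$-closed, $U^c\in t_2$, and $U\cap U^c=\emptyset$, so $\{U,U^c\}$ is a pairwise-$T_2$ separating pair: the first clause of the definition if $x\in U$, and the symmetric clause if $y\in U$. This is precisely where pairwise zero-dimensionality is indispensable — it forces the basis elements of $t_1$ to behave like clopen sets relative to $t_2$, which is exactly what upgrades a merely $T_0$ (one-sided) separation into the two-sided separation that pairwise $T_2$ demands. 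I expect this to be the conceptual crux; everything else is bookkeeping.

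It remains to close the loop through~(4),~(5),~(6). For~(4)$\Rightarrow$(3): given disjoint $U,V\in t_1\cup t_2$ with $x\in U$, $y\in V$, the only case not already of pairwise-$T_2$ shape is the one where $U,V$ lie in the same topology $t_i$; then shrink $U$ to a basic element of $t_i$ containing $x$ and pass to its complement, which is open in the other topology and still separates, recovering a genuine pairwise-$T_2$ pair. For~(5),~(6): join $T_2$ implies join $T_0$, and under pairwise zero-dimensionality join $T_0$ already yields, for any two distinct points, a single $t_1$- or $t_2$-open separating them — split a basic join-open $A\cap B$ with $A\in t_1$, $B\in t_2$ and note $y$ must miss $A$ or miss $B$ — so a single such open feeds back into the shrink-and-complement argument of the previous paragraph to give~(3); the matching statements for bi-$T_0$ are exactly those recorded in \cite{guram}, which we invoke rather than reprove. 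The main obstacle, such as it is, is purely organizational: tracking which of the two configurations in the definition of pairwise $T_2$ each construction lands in, and using zero-dimensionality (not mere $T_0$-ness) consistently throughout.
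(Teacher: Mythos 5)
Your argument is correct, but note that the paper does not actually prove this lemma at all: it is imported verbatim from \cite{guram} (the paper even defers the definitions of conditions (5) and (6) to that reference), so there is no in-paper proof to compare against. What you have written is essentially a self-contained reconstruction of the standard argument from \cite{guram}, organized with (3) as the hub. The individual steps check out: (3) yields (1), (2), (4), (5) trivially; your shrink-and-complement step for (1)$\Rightarrow$(3) is exactly the right use of pairwise zero-dimensionality (a basic $U \in B_1 = t_1 \cap \delta_2$ containing the separated point has $U^c \in t_2$, producing the two-sided separation, with the two clauses of Definition~\ref{PT2} covering the two orientations of the $T_0$ separation), and the same device correctly handles (4)$\Rightarrow$(3) when both separating sets lie in one topology, and (5),(6)$\Rightarrow$(3) after splitting a basic join-open $A \cap B$ to extract a single $t_1$- or $t_2$-open distinguishing the two points. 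The only soft spot is that, like the paper, you never pin down the definitions of join $T_2$ and bi-$T_0$ and instead invoke \cite{guram} for them; given that the implications you prove for (5) and (6) depend on those definitions (bi-$T_0$ meaning a set of $t_1 \cup t_2$ contains one point but not the other), your proof is complete only modulo that citation, which is the same concession the paper itself makes. What your route buys is an explicit demonstration of where pairwise zero-dimensionality enters (only in the return implications to (3)), which the paper's bare citation hides.
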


\begin{definition}\label{PC}
We say a bitopological space $(X,t_1,t_2)$ is pairwise compact if for each cover $\{u_i | i \in I\}$ of X with $u_i \in t_1 \cup t_2$, there exists a finite subcover. We denote the collections of compact subsets of $(X,t_1,t_2)$  $\sigma _1$ and $\sigma _2$. 
\end{definition}

Another useful lemma.
\begin{lemma}\label{paircomp} 
A bitopological space $(X,t_1,t_2)$ is pairwise compact if and only if $\delta _1 \subseteq \sigma _2$ and $\delta _2 \subseteq \sigma _1$.
\end{lemma}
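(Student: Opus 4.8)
The plan is to prove the two implications separately. The forward direction is an easy ``add the complement to the cover'' argument, while the backward direction requires an asymmetric two-stage reduction, and that is where the only real subtlety lies. Throughout I read $\sigma_i$ as the collection of $t_i$-compact subsets of $X$, parallel to $\delta_i$ being the $t_i$-closed subsets.

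For the ($\Rightarrow$) direction, suppose $(X,t_1,t_2)$ is pairwise compact. Take $C \in \delta_1$ and an arbitrary $t_2$-open cover $\{v_i\}_{i\in I}$ of $C$. Since $X\setminus C \in t_1$, the family $\{v_i\}_{i\in I}\cup\{X\setminus C\}$ covers $X$ by members of $t_1\cup t_2$, so pairwise compactness yields a finite subcover; discarding $X\setminus C$ (which contributes nothing to covering $C$) leaves a finite subfamily of the $v_i$ covering $C$. Hence $C\in\sigma_2$, so $\delta_1\subseteq\sigma_2$, and $\delta_2\subseteq\sigma_1$ follows by the symmetric argument with the roles of $t_1$ and $t_2$ exchanged.

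For the ($\Leftarrow$) direction, assume $\delta_1\subseteq\sigma_2$ and $\delta_2\subseteq\sigma_1$, and let $\{u_i\}_{i\in I}$ cover $X$ with each $u_i\in t_1\cup t_2$. Split the index set as $I_1=\{i : u_i\in t_1\}$ and $I_2=I\setminus I_1$ (indices $i$ with $u_i\in t_1\cap t_2$ may be assigned to $I_1$ arbitrarily). Put $U=\bigcup_{i\in I_1}u_i\in t_1$; then $X\setminus U\in\delta_1\subseteq\sigma_2$ is $t_2$-compact and is covered by the $t_2$-opens $\{u_i\}_{i\in I_2}$, so there is a finite $F_2\subseteq I_2$ with $X\setminus U\subseteq\bigcup_{i\in F_2}u_i$. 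Now put $W=\bigcup_{i\in F_2}u_i\in t_2$, a finite union of $t_2$-opens; then $X\setminus W\in\delta_2\subseteq\sigma_1$ is $t_1$-compact, and since $X=U\cup W$ it is covered by $\{u_i\}_{i\in I_1}$, so there is a finite $F_1\subseteq I_1$ with $X\setminus W\subseteq\bigcup_{i\in F_1}u_i$. Then $\{u_i\}_{i\in F_1\cup F_2}$ covers $X$, so $(X,t_1,t_2)$ is pairwise compact.

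I expect the main obstacle to be noticing that one \emph{cannot} reduce the $t_1$-part and the $t_2$-part of the cover symmetrically and independently: replacing $U$ by a finite union and $V=\bigcup_{i\in I_2}u_i$ by a finite union at the same time only covers $X\setminus(U\cap V)$, not $X$. The remedy is exactly the staged argument above --- first shrink the $t_2$-opens to finitely many using $t_2$-compactness of the $t_1$-closed remainder $X\setminus U$, and only then shrink the $t_1$-opens using $t_1$-compactness of the complement of the now-finite $t_2$-union $W$. It is worth remarking that Lemma~\ref{paircomp} is precisely the bitopological analogue of the classical fact that in a compact space the closed sets are exactly the compact ones.
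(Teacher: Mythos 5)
Your proof is correct: the forward direction by adjoining the $t_1$-open complement of a $t_1$-closed set to a $t_2$-open cover, and the backward direction by the two-stage reduction (first shrinking the $t_2$-part against the $t_2$-compact set $X\setminus U$, then the $t_1$-part against the complement of the finite $t_2$-union) are both sound, and your remark about why the naive symmetric reduction only covers $X\setminus(U\cap V)$ is apt. Note that the paper itself offers no proof of this lemma --- it is quoted as a fact from \cite{guram} --- so there is nothing internal to compare against; your argument is essentially the standard one given in that reference.
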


\begin{definition}\label{PSS}
We say $(X,t_1,t_2)$ is a pairwise Stone space if it is pairwise compact, pairwise Hausdorff and pairwise zero-dimensional.
\end{definition}

We will use PStone to denote the category of pairwise Stone spaces and bi-continuous maps: that is, maps that are continuous with respect to both topologies, and DLat to denote the category of bounded distributive lattices.\\

Let L be a bounded distributive lattice and X = pf(L) be the set of prime filters of L. We define the following::
\begin{definition}\label{phis}
  $\Phi_+, \Phi_− : L \rightarrow \mathcal{P}(X)$ by
  \begin{enumerate}
    \item $\Phi_+(a) = \{x \in X | a \in x\}$ and
    \item $\Phi_-(a) = \{x \in X | a \notin x\}$.
  \end{enumerate}
Let $B_+ = \Phi_+[L] = \{\Phi_+(a) | a \in L\}$, $B_- = \Phi_-[L] = \{\Phi_-(a) | a \in L\}$, $t_+$ be the topology generated by $B_+$, and $t_-$ be the topology generated by $B_-$.
\end{definition}

\begin{note}
We wish to note that the fact that $\Phi_+$ is a bounded lattice isomorphism from L to  $(B_+,\cap,\cup,\emptyset,X)$ is established in \cite{guram}. 
\end{note}

\begin{lemma}
$(X,t_+, t_-)$ is a pairwise Stone space. 
\end{lemma}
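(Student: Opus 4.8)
The plan is to verify the three defining conditions of a pairwise Stone space for $(X,t_+,t_-)$ in turn: pairwise zero-dimensionality, pairwise Hausdorffness, and pairwise compactness. Throughout, I would lean heavily on the facts already recalled from \cite{guram}: that $\Phi_+$ is a bounded lattice isomorphism onto $(B_+,\cap,\cup,\emptyset,X)$, and the dual statement for $\Phi_-$. A basic observation to establish first is the relationship $\Phi_-(a) = \Phi_+(a)^c = X \setminus \Phi_+(a)$, which is immediate from the definitions ($a \notin x$ versus $a \in x$). Consequently each member of $B_+$ is the complement of a member of $B_-$ and vice versa, so every basic $t_+$-open is $t_-$-closed and every basic $t_-$-open is $t_+$-closed.

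From that observation pairwise zero-dimensionality is nearly automatic: $B_+$ is by definition a basis for $t_+$, and since each $\Phi_+(a) = \Phi_-(a)^c$ is closed in $t_-$, we get $B_+ \subseteq t_+ \cap \delta_-$, giving a basis of $t_+$-open, $t_-$-closed sets; symmetrically for $t_-$. For pairwise Hausdorffness, by Lemma~\ref{zerodimequ} it suffices (given pairwise zero-dimensionality) to check that one of the topologies is $T_0$, or directly: if $x \neq y$ are distinct prime filters then without loss of generality there is $a \in x \setminus y$, whence $x \in \Phi_+(a)$, $y \in \Phi_-(a)$, and $\Phi_+(a) \cap \Phi_-(a) = \emptyset$ with $\Phi_+(a) \in t_+$ and $\Phi_-(a) \in t_-$; that is exactly the pairwise $T_2$ condition of Definition~\ref{PT2}.

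The main obstacle is pairwise compactness, which is the genuine content of the lemma. The clean route is to use Lemma~\ref{paircomp}: it suffices to show $\delta_+ \subseteq \sigma_-$ and $\delta_- \subseteq \sigma_+$. Equivalently, and more in the spirit of the classical Stone argument, I would show directly that any cover of $X$ by members of $B_+ \cup B_-$ admits a finite subcover, then upgrade to arbitrary $t_+ \cup t_-$ covers by refining to basic opens. Given a cover $\{\Phi_+(a_i)\}_{i\in I} \cup \{\Phi_-(b_j)\}_{j\in J}$ of $X$, suppose for contradiction no finite subcover exists; then the complements $\{\Phi_+(a_i)^c\} \cup \{\Phi_-(b_j)^c\} = \{\Phi_-(a_i)\} \cup \{\Phi_+(b_j)\}$ have the finite intersection property, which in lattice terms says the set $F = \{a_i\}\cup\{b_j^{\,\prime}\text{-related elements}\}$ — more precisely the filter generated by $\{b_j\}$ together with the ideal generated by $\{a_i\}$ — is proper, so by the prime filter theorem (Stone's separation lemma for distributive lattices) there is a prime filter $x$ containing all $b_j$ and no $a_i$; such an $x$ lies in no member of the cover, a contradiction. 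I would set this up carefully so that the filter/ideal pair is disjoint, invoke the prime separation theorem, and conclude. After handling basic covers, the reduction from arbitrary $t_+\cup t_-$ covers is routine since $B_+$ and $B_-$ are bases. The delicate point to get right is bookkeeping the mixed cover: translating "$\Phi_+(a_i)$ and $\Phi_-(b_j)$ together cover $X$" into a statement about a filter meeting an ideal, so that the distributive prime filter theorem applies cleanly.
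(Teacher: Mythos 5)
Your plan is correct, but note that the paper itself does not prove this lemma at all: it appears in the section titled ``Facts From \emph{Bitopological duality for distributive lattices and Heyting algebras}'' and is simply imported from \cite{guram}, so there is no in-paper argument to compare against. What you have written is essentially the standard proof from that reference, and all the key steps are right: the identity $\Phi_-(a)=\Phi_+(a)^c$ gives $B_+\subseteq t_+\cap\delta_-$ and $B_-\subseteq t_-\cap\delta_+$, hence pairwise zero-dimensionality (using that $B_+$ is closed under finite intersections and contains $X=\Phi_+(1)$, so it is a genuine basis, not just a subbasis -- this is also what licenses your later refinement step); distinct prime filters are separated exactly as you say, giving pairwise $T_2$; and pairwise compactness reduces to basic covers and then to the prime filter separation theorem. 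Two points you should make explicit when writing it up carefully. First, the passage from ``no finite subcover'' to ``the filter generated by $\{b_j\}$ and the ideal generated by $\{a_i\}$ are disjoint'' is not pure finite-intersection bookkeeping: if $\bigwedge b_{j_k}\le\bigvee a_{i_l}$, you need a witnessing prime filter containing all the $b_{j_k}$ and none of the $a_{i_l}$, and then \emph{primeness} is what forces some $a_{i_l}$ into it and yields the contradiction; equivalently, disjointness of the filter/ideal pair is exactly the statement that no finite subfamily covers. Second, handle the degenerate cases where the cover uses only $B_+$ sets or only $B_-$ sets, by taking the trivial filter $\{1\}$ or the trivial ideal $\{0\}$ as the other member of the pair. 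With those details filled in, your argument is a complete and correct proof of what the paper leaves to the citation.
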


\begin{note}
If $(X,t_1,t_2)$ is pairwise zero-dimensional, then $B_2 = \{u^c| u \in B_1\}$ and $B_1 = \{v^c | v \in B_2\}$. Moreover, both $B_1$ and $B_2$ contain $\emptyset$ and X, are closed with respect to finite unions/intersections, form bounded distributive lattices and $(B_1,\cap,\cup,\emptyset,X)$  is dually isomorphic to $(B_2,\cup,\cap,X,\emptyset)$.
\end{note}

We have established enough basic facts as to the relationship between Dlat and PStone to give the reader some confidence that the following is true \cite{guram}:

\begin{theorem}\label{DlattoPSS}
The functors outlined above establish a dual equivalence between DLat and PStone.
\end{theorem}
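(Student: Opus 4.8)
The plan is to make the two functors explicit and then build the unit and counit as natural isomorphisms. On objects, the contravariant functor $F\colon \mathrm{DLat}\to\mathrm{PStone}$ sends a bounded distributive lattice $L$ to its prime filter space $(pf(L),t_+,t_-)$, which is a pairwise Stone space by the lemma preceding the present theorem, and the contravariant functor $G\colon\mathrm{PStone}\to\mathrm{DLat}$ sends $(X,t_1,t_2)$ to the bounded distributive lattice $(B_1,\cup,\cap,\emptyset,X)$, where $B_1=t_1\cap\delta_2$ is the distinguished basis (the preceding Note records that this is a bounded distributive lattice). On morphisms, a bounded lattice homomorphism $h\colon L\to M$ is sent to $h^{-1}\colon pf(M)\to pf(L)$, $x\mapsto h^{-1}[x]$: first I would check that $h^{-1}[x]$ is again a prime filter, using that $h$ preserves $0,1,\wedge,\vee$, and then bi-continuity via $(h^{-1})^{-1}\bigl(\Phi_+^{L}(a)\bigr)=\Phi_+^{M}(h(a))\in B_+^{M}$ together with the analogous identity for $\Phi_-$, so that preimages of basic opens are basic opens. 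A bi-continuous map $f\colon X\to Y$ is sent to the inverse-image map on the distinguished bases: if $u$ lies in the distinguished basis of $Y$ (open in the first topology, closed in the second), then $f^{-1}(u)$ is open in $t_1$ and closed in $t_2$, hence lies in $B_1$, and $f^{-1}$ visibly preserves $\cup,\cap,\emptyset$ and $X$. Functoriality (identities, composition) is immediate since everything is an inverse image.

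Next I would supply the natural isomorphisms. For the unit, $\Phi_+\colon L\to B_+$ is a bounded lattice isomorphism — this is the representation cited from \cite{guram} — and one checks that $B_+$ is exactly $GF(L)$: clearly $B_+\subseteq t_+\cap\delta_-$, and conversely a $t_+$-open $t_-$-closed set is $t_+$-compact by pairwise compactness (Lemma \ref{paircomp}), hence a finite union of basic sets $\Phi_+(a_i)$, hence equal to $\Phi_+\bigl(\bigvee_i a_i\bigr)\in B_+$. Naturality of $\eta=\Phi_+$ is the square $\Phi_+^{M}\circ h=(h^{-1})^{-1}\circ\Phi_+^{L}$, a one-line check since both sides send $a$ to $\{x\in pf(M): h(a)\in x\}$. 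For the counit, define $\varepsilon_X\colon X\to pf(B_1)$ by $\varepsilon_X(x)=\{u\in B_1: x\in u\}$; this is a prime filter because $B_1$ is a ring of sets, and the identities $\varepsilon_X^{-1}\bigl(\{Q\in pf(B_1): u\in Q\}\bigr)=u$ and $\varepsilon_X^{-1}\bigl(\{Q\in pf(B_1): u\notin Q\}\bigr)=u^c$ show that $\varepsilon_X$ is bi-continuous and open onto its image, since the $u$ and the $u^c$ with $u\in B_1$ form bases of $t_1$ and $t_2$.

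The crux — and the step I expect to be the main obstacle — is that $\varepsilon_X$ is a \emph{bijection} onto $pf(B_1)$. Injectivity is exactly where pairwise Hausdorffness, equivalently bi-$T_0$ via Lemma \ref{zerodimequ}, enters: distinct points $x,y$ are separated by some basic open $u\in B_1\cup B_2$, so $\varepsilon_X(x)\neq\varepsilon_X(y)$. Surjectivity is the genuinely topological part: given a prime filter $P$ of $B_1$, form the family $\mathcal F=\{u: u\in P\}\cup\{u^c: u\in B_1\setminus P\}$; each $u\in P$ is $t_2$-closed and each $u^c$ with $u\notin P$ is $t_1$-closed, so $\mathcal F\subseteq\delta_1\cup\delta_2$. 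I would verify that $\mathcal F$ has the finite intersection property: if $u_1\cap\cdots\cap u_n\cap v_1^c\cap\cdots\cap v_m^c=\emptyset$ with $u_1,\dots,u_n\in P$ and $v_1,\dots,v_m\notin P$, then $u_1\cap\cdots\cap u_n\subseteq v_1\cup\cdots\cup v_m$, so $v_1\cup\cdots\cup v_m\in P$ by upward closure and then some $v_j\in P$ by primeness, a contradiction. Pairwise compactness — the dual form of Definition \ref{PC}, via Lemma \ref{paircomp}: any subfamily of $\delta_1\cup\delta_2$ with the finite intersection property has nonempty intersection — then produces a point $x\in\bigcap\mathcal F$, and by construction $\varepsilon_X(x)=P$. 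Since a continuous open bijection is a bi-homeomorphism, $\varepsilon_X$ is an isomorphism in $\mathrm{PStone}$; naturality of $\varepsilon$ in $X$ is once more a formal inverse-image identity, and assembling $F$, $G$ with the natural isomorphisms $\eta$ and $\varepsilon$ yields the asserted dual equivalence.
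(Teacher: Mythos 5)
Your proposal is correct in outline, but it takes a different route from the paper only in the sense that the paper does not argue this theorem at all: its ``proof'' is a one-line deferral to \cite{guram}, whereas you reconstruct the argument that \cite{guram} actually carries out. Your reconstruction follows that standard template faithfully: the two contravariant functors on objects and morphisms, the unit $\Phi_+$ (whose injectivity, note, rests on the prime filter theorem, which is exactly what the paper's citation of the representation result covers), the identification $B_+=t_+\cap\delta_-$ via pairwise compactness (your use of $\delta_2\subseteq\sigma_1$ from Lemma \ref{paircomp} to get a finite subcover of a $t_+$-open, $t_-$-closed set is the right mechanism), and the counit $\varepsilon_X=\Psi$ with surjectivity obtained from the finite intersection property of $\{u:u\in P\}\cup\{u^c:u\notin P\}$ together with primeness of $P$ --- this is precisely where pairwise compactness and pairwise Hausdorffness/bi-$T_0$ earn their keep, and you place them correctly. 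Two small points of hygiene: the dual finite-intersection form of compactness you invoke is the De Morgan dual of Definition \ref{PC} rather than Lemma \ref{paircomp} itself, and in the injectivity step, when the separating basic set lies in $B_2$ you should pass to its complement in $B_1$ before concluding $\varepsilon_X(x)\neq\varepsilon_X(y)$; both are routine. What your approach buys is a self-contained account of the machinery (in particular of $\Psi$ and of $L_*{}^*=\Phi_+[L]$) that the paper later leans on in Sections \ref{B1struct} and \ref{estdual} without re-deriving; what the paper's citation buys is brevity and the full verification of naturality and of the pairwise Stone axioms for $(pf(L),t_+,t_-)$, which you assert or cite but do not check in detail.
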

\begin{proof}
  We refer the reader to \textit{Bitopological duality for distributive lattices and Heyting algebras}.\cite{guram}\\
\end{proof}

\section{Enforcing more structure on $(B_1,\cup,\cap,\emptyset,X)$}\label{B1struct}

If $(X,t_1,t_2)$ is a pairwise Stone space, we wish to place necessary and sufficient restrictions on it such that $(B_1,\cup,\cap,\emptyset,X)$ is a CRDSA. Let Int denote interior of, Cl denote closure of and Bd denote boundary of topological spaces. First we look at conditions for (dual) pseudocomplementation, $^*$ and $^+$. 

\begin{lemma}\label{pcdpc}
Let $(X,t_1,t_2)$ be a pairwise zero-dimensional space, then \\$u \in (B_1,\cup,\cap,\emptyset,X)$ has a dual pseudocomplement $w \leftrightarrow w^c$ is the pseudocomplement of $u^c$ in the dual lattice $(B_2,\cup,\cap,X,\emptyset)$ and dually for a pseudocomplement $w$.
\end{lemma}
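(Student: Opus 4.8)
The plan is to prove the two directions separately, exploiting the duality between the bounded distributive lattices $(B_1,\cup,\cap,\emptyset,X)$ and $(B_2,\cup,\cap,X,\emptyset)$ recorded in the Note at the end of Section~\ref{gfacts}. Under this dual isomorphism $u \mapsto u^c$, joins and meets are interchanged, the bottom $\emptyset$ of $B_1$ corresponds to the top $\emptyset^c = X$ of $B_2$ (as written, with $B_2$ having bounds $X,\emptyset$), and the order is reversed. The key observation is therefore that the defining inequality for a dual pseudocomplement in $B_1$ translates verbatim into the defining inequality for a pseudocomplement in $B_2$.

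First I would fix $u \in B_1$ and unwind Definition~\ref{2p}: $w \in B_1$ is the dual pseudocomplement $u^+$ of $u$ iff $w$ is the \emph{least} element of $B_1$ with $u \vee w = 1_{B_1} = X$. Applying the dual isomorphism $({-})^c : B_1 \to B_2$: $w$ is least in $B_1$ with $u \cup w = X$ iff $w^c$ is \emph{greatest} in $B_2$ with $u^c \cap w^c = X^c = \emptyset$ (the bottom of $B_2$). But ``greatest element of $B_2$ meeting $u^c$ to the bottom'' is precisely the statement that $w^c$ is the pseudocomplement of $u^c$ in $(B_2,\cup,\cap,X,\emptyset)$. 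This gives the forward and backward implications simultaneously, since every step is an ``iff.'' The dual claim — that $u \in B_1$ has a pseudocomplement $w$ iff $w^c$ is the dual pseudocomplement of $u^c$ in $B_2$ — follows by the symmetric argument, or simply by applying what was just proved with the roles of $B_1$ and $B_2$ (equivalently of $t_1$ and $t_2$) swapped, using that pairwise zero-dimensionality is symmetric in the two topologies and that $B_1 = \{v^c \mid v \in B_2\}$.

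The one genuine point requiring care — the only place this is more than bookkeeping — is that a (dual) pseudocomplement, if it exists, must itself lie in $B_1$, not merely in the ambient power set $\mathcal{P}(X)$; and correspondingly the pseudocomplement of $u^c$ must exist \emph{within} $B_2$. This is exactly why the lemma is phrased as a biconditional about existence rather than as a formula. The argument above handles this automatically because the dual isomorphism is a bijection $B_1 \to B_2$: $w$ ranges over $B_1$ precisely when $w^c$ ranges over $B_2$, so the existence of a least such $w$ in $B_1$ is equivalent to the existence of a greatest such $w^c$ in $B_2$. I would make this explicit rather than leave it implicit, since it is the heart of the statement. No completeness or topological closure properties beyond those already listed in the Note (closure of $B_1,B_2$ under finite unions and intersections, and the dual isomorphism) are needed; the pairwise zero-dimensional hypothesis enters only to guarantee that $B_1$ and $B_2$ are these mutually dual complemented-by-$({-})^c$ sublattices of $\mathcal{P}(X)$ in the first place.
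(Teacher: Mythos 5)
Your proposal is correct and follows essentially the same route as the paper's own (one-line) proof: translating ``$w$ least in $B_1$ with $u \cup w = X$'' into ``$w^c$ greatest in $B_2$ with $u^c \cap w^c = \emptyset$'' via the order-reversing bijection $w \mapsto w^c$ between $B_1$ and $B_2$. Your added emphasis that this bijection is what makes existence in $B_1$ equivalent to existence in $B_2$ is a useful explicit detail that the paper leaves implicit.
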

\begin{proof}
$w \in B_1$ is minimal with respect to $u \cup w = X \leftrightarrow w^c \in B_2$ is maximal with respect to $u^c \cap w^c = \emptyset$ and dually if $w$ is a pseudocomplement. 
\end{proof}

We note that Lemma \ref{pcdpc} has a more general statement for dually isomorphic lattices.

\begin{lemma}Let $(X,t_1,t_2)$ be a pairwise zero-dimensional space, then $(B_1,\cup,\cap,\emptyset,X)$ is a double p-algebra $\leftrightarrow (X,t_1,t_2)$ satisfies the conditions below:
  \begin{enumerate}
  \item $v \in B_2 \rightarrow Int_1(v) \in B_1$, then $(B_1,\cap,\cup,\emptyset,X)$ has a pseudocomplement operation $^*$, namely $u^* = Int_1(u^c) = Cl_1(u)^c$ for all $u \in B_1$.
  \item $u \in B_1 \rightarrow Int_2(u) \in B_2$, then $(B_1,\cap,\cup,\emptyset,X)$ has a dual pseudocomplement operation $^+$, namely $u^+ =  Int_2(u)^c = Cl_2(u^c)$ for all $u \in B_1$. 
  \end{enumerate}
\end{lemma}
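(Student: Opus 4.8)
The plan is to prove both directions simultaneously by pinning down, for each $u\in B_1$, the \emph{unique possible} candidate for its pseudocomplement and dual pseudocomplement purely in terms of the topologies, and then observing that this candidate lies in $B_1$ exactly when the listed conditions hold. First I would record the bookkeeping facts from pairwise zero-dimensionality (the Note of Section~\ref{gfacts}): $B_2=\{u^c\mid u\in B_1\}$, $B_1$ is a basis for $t_1$ and $B_2$ a basis for $t_2$, and $(B_1,\cup,\cap,\emptyset,X)$ is already a bounded distributive lattice (meet $=\cap$, join $=\cup$, order $=\subseteq$). Since $B_1$ is a basis for $t_1$, every $t_1$-open set — in particular $Int_1(w)$ — is the union of the members of $B_1$ it contains; dually, since $B_2$ is a basis for $t_2$, every $t_2$-closed set is the intersection of the $B_1$-members containing it. I would also note the bare topological identities $Cl_1(u)^c=Int_1(u^c)$ and $Cl_2(u^c)=Int_2(u)^c$, which convert the two displayed formulas into one another.

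For the pseudocomplement half, fix $u\in B_1$. If $u$ has a pseudocomplement $p$ in the lattice $B_1$ (greatest $w\in B_1$ with $w\cap u=\emptyset$, per Definition~\ref{2p}), then $p\subseteq u^c$ and $p$ being $t_1$-open gives $p\subseteq Int_1(u^c)$; conversely any $b\in B_1$ with $b\subseteq u^c$ satisfies $b\cap u=\emptyset$, hence $b\subseteq p$, so $Int_1(u^c)=\bigcup\{b\in B_1\mid b\subseteq u^c\}\subseteq p$. Thus $p=Int_1(u^c)$, and in particular $Int_1(u^c)\in B_1$; ranging over all $u\in B_1$ (equivalently, via $v=u^c$, over all $v\in B_2$) this gives the forward implication toward condition~(1). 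Conversely, assuming condition~(1), for each $u\in B_1$ we have $Int_1(u^c)\in B_1$, and the same two inclusions run in reverse show it is the greatest member of $B_1$ disjoint from $u$; hence $u^*=Int_1(u^c)=Cl_1(u)^c$ is a genuine pseudocomplement operation on $B_1$. So $B_1$ is pseudocomplemented iff condition~(1) holds.

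The dual pseudocomplement half is the order-dual argument with $Cl_2$ in place of $Int_1$ and $B_2$ as a basis for $t_2$: if $u$ has a dual pseudocomplement $q$ in $B_1$ (least $w\in B_1$ with $w\cup u=X$), then $u^c\subseteq q$ and $q$ being $t_2$-closed gives $Cl_2(u^c)\subseteq q$, while any $c\in B_1$ with $u\cup c=X$ has $u^c\subseteq c$, hence $q\subseteq c$, so $q\subseteq\bigcap\{c\in B_1\mid u^c\subseteq c\}=Cl_2(u^c)$; thus $q=Cl_2(u^c)\in B_1$, which since $Cl_2(u^c)=Int_2(u)^c$ forces $Int_2(u)\in B_2$ and gives condition~(2). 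Conversely, under condition~(2) we get $Cl_2(u^c)=Int_2(u)^c\in B_1$ for every $u\in B_1$, and it is the least member of $B_1$ joining with $u$ to $X$; hence $u^+=Int_2(u)^c=Cl_2(u^c)$ is a dual pseudocomplement operation. Combining the two halves with the already-known bounded-distributive-lattice structure of $B_1$ yields the equivalence, together with the stated formulas. (Alternatively, after the pseudocomplement half one could invoke Lemma~\ref{pcdpc} applied to the dual isomorphism $(B_1,\cap,\cup,\emptyset,X)\cong(B_2,\cup,\cap,X,\emptyset)$ to get the dual half for free.)

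The one step that needs genuine care — and which I expect to be the main obstacle in a careful write-up — is matching the \emph{abstract} lattice pseudocomplement/dual pseudocomplement (a supremum, resp. infimum, taken inside $B_1$, which is \emph{not} closed under arbitrary unions or intersections in $\mathcal{P}(X)$) with the \emph{concrete} topological operator $Int_1(u^c)$, resp. $Cl_2(u^c)$. The argument goes through precisely because $B_1$ and $B_2$ are bases: those two operators are themselves the union/intersection of $B_1$-members, so whenever the extremal element exists in $B_1$ it must equal the operator, and whenever the operator lies in $B_1$ it automatically witnesses the extremal property. I would also be careful that the "greatest''/"least'' quantifiers in Definition~\ref{2p} are interpreted within $B_1$ rather than within $\mathcal{P}(X)$, since it is only within $B_1$ that these extremal elements need exist.
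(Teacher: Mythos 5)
Your proof is correct and follows essentially the same route as the paper: identify the only possible candidates $Int_1(u^c)$ and $Cl_2(u^c)$ via the basis property of $B_1$ and $B_2$, and observe that membership of these candidates in $B_1$ is exactly conditions (1) and (2). The only cosmetic differences are that you argue the "pseudocomplement exists $\Rightarrow$ equals $Int_1(u^c)$" step directly (the paper does it by contradiction with a basis element $b_i$) and you handle the dual half by the order-dual argument rather than by invoking Lemma~\ref{pcdpc}, which you correctly note as an alternative.
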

\begin{proof}
Recall $B_1 = \{v^c | v \in B_2\} = t_1 \cap \delta _2$ and $B_2 = \{u^c| u \in B_1\} = t_2 \cap \delta _1$. 
\begin{enumerate}
  \item $\rightarrow$ For all $u \in B_1$, $u = v^c$ for some $v \in B_2$, therefore $u^c = v$ and $Int_1(v)$ is the largest open set contained in v and must be in $B_1$ by supposition. The fact that $^*$ is the psuedocomplement follows.\\
$\leftarrow$ Now suppose that $B_1$ has a pseudocomplement $^*$ and recall $Int_1(u^c) \in t_1$ is maximal with respect to $u \cap Int_1(u^c) = \emptyset$ and hence $u^* \subseteq Int_1(u^c)$. If $Int_1(u^c) = \emptyset$ we have equality so suppose $x \in Int_1(u^c)$ such that $x \notin u^c$. As $B_1$ is a basis for $t_1$, $Int_1(u^c) = \cup b_i$ for some $\{b_i | b_i \in B_1\}$ and hence $x \in b_i$ for some i. Now consider $u \cup b_i \in B_1$ contradicting the maximality of $u^*$. 
  \item $\rightarrow$ For all $v \in B_2$, $v = u^c$ for some $u \in B_1$, therefore $v^c = u$ and $Int_2(u)$ is the largest open set contained in u, hence $Int_2(u)^c$ is minimal closed in $t_2$ containing $u^c$ and is in $B_1$ by the supposition $Int_2(u) \in B_2$. That fact that $^+$ is the dual psuedocomplement follows.\\
$\leftarrow$ Now suppose that $B_1$ has a dual pseudocomplement $^+$ and recall $Int_2(u)^c$ is minimal closed in $t_2$ containing $u^c$, hence $B_1 =  t_1 \cap \delta _2 \rightarrow Int_2(u)^c \subseteq u^+$. If $u^+ = \emptyset$ we have equality, so suppose $x \in u^+$ such that $x \notin Int_2(u)^c$. Now we consider $(u^+)^c \subseteq  Int_2(u)$  and using Lemma \ref{pcdpc} the proof follows as in (1).
\end{enumerate}
\end{proof}
Now we look at extending $B_1$ as a double p-algebra to a double Stone algebra, e.g. $^*$ and $^+$ satisfy the Stone identies $x^* \vee x^{**} = X$ and $x^+ \wedge x^{++} = \emptyset$.
\begin{lemma}If $(X,t_1,t_2)$ is a pairwise zero-dimensional space such that $(B_1,\cup,\cap,\emptyset,X)$ is a double p-algebra then $(B_1,\cup,\cap,\emptyset,X)$ is a double Stone algebra $\leftrightarrow (X,t_1,t_2)$ satisfies the conditions below:
\begin{enumerate}
  \item $u^* = Int_1(u^c) = Cl_1(u)^c$ is $t_1$ clopen.
  \item ($u^+)^c = (Cl_2(u^c))^c = Int_2(u)$ is $t_2$ clopen.
\end{enumerate}
\end{lemma}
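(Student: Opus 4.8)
The plan is to translate the two Stone identities $u^* \cup u^{**} = X$ and $u^+ \cap u^{++} = \emptyset$ directly into topological statements about $t_1$ and $t_2$, using the explicit formulas $u^* = Int_1(u^c) = Cl_1(u)^c$ and $u^+ = Int_2(u)^c = Cl_2(u^c)$ established in the preceding lemma, and then to recognize that each translated identity is equivalent to the corresponding clopenness condition.

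First I would compute the iterated pseudocomplements. For $u \in B_1$ we have $u^* = Cl_1(u)^c$, so writing $A = Cl_1(u)$ (a $t_1$-closed set whose complement lies in $B_1$) gives $u^{**} = Cl_1(A^c)^c = Int_1(A) = Int_1(Cl_1(u))$, using $Cl_1(S)^c = Int_1(S^c)$. Hence $u^* \cup u^{**} = Cl_1(u)^c \cup Int_1(Cl_1(u))$, and taking complements this equals $X$ precisely when $Cl_1(u) \cap Int_1(Cl_1(u))^c = \emptyset$, i.e. when $Cl_1(u) \subseteq Int_1(Cl_1(u))$. Since the reverse inclusion is automatic and $Cl_1(u)$ is already $t_1$-closed, this says exactly that $Cl_1(u)$ is $t_1$-clopen, equivalently that $u^* = Cl_1(u)^c$ is $t_1$-clopen. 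Ranging over all $u \in B_1$, the first Stone identity holds in $B_1$ iff condition (1) holds.

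Dually, for $u \in B_1$ put $B = Int_2(u)$, a $t_2$-open set, so that $u^+ = B^c$, $(u^+)^c = B$, and $u^{++} = Int_2(B^c)^c = Cl_2(B) = Cl_2(Int_2(u))$. Then $u^+ \cap u^{++} = Int_2(u)^c \cap Cl_2(Int_2(u))$, which is empty precisely when $Cl_2(Int_2(u)) \subseteq Int_2(u)$, i.e. when $Int_2(u)$ is $t_2$-closed, hence $t_2$-clopen. Ranging over all $u$, the second Stone identity holds iff condition (2) holds. Combining the two bi-implications with the standing hypothesis that $B_1$ is already a double p-algebra yields the lemma.

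I do not expect a serious obstacle: once the formulas from the preceding lemma are in hand, the heart of the argument is the elementary observation that, for a closed set $C$, $C^c \cup Int(C) = X$ is equivalent to $C$ being open. The only points requiring care are bookkeeping — keeping straight which topology each interior or closure is taken in, confirming that $u^*, u^{**}, u^+, u^{++}$ genuinely lie in $B_1$ (guaranteed by the double p-algebra hypothesis, so the identities are even well-posed), and checking that the degenerate cases such as $u^* = \emptyset$ are subsumed by the inclusion argument above rather than needing separate treatment.
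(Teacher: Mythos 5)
Your proposal is correct and follows essentially the same route as the paper: both translate the Stone identities into the statements $u^* \cup u^{**} = X$ and $u^+ \cap u^{++} = \emptyset$ via the formulas $u^* = Cl_1(u)^c$, $u^+ = Cl_2(u^c)$, and reduce each to clopenness of $Cl_1(u)^c$ (resp.\ $Int_2(u)$) by an inclusion argument. Your rewriting of $u^{**}$ as $Int_1(Cl_1(u))$ is just the paper's $Cl_1(Cl_1(u)^c)^c$ in interior form, so the two arguments coincide.
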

\begin{proof}
Suppose $u \in B_1$.
\begin{enumerate}
\item $Cl_1(u)^c$ is $t_1$ clopen $\leftrightarrow u^* \vee u^{**} = Cl_1(u)^c \cup Cl_1(Cl_1(u)^c)^c = X$.
  \begin{enumerate}
    \item $\rightarrow Cl_1(u)^c \subseteq Cl_1(Cl_1(u)^c)$ with equality if $Cl_1(u)^c = u* = Int_1(u^c)$ is clopen in $B_1$, hence $u^* \vee u^{**} = Cl_1(u)^c \cup Cl_1(Cl_1(u)^c)^c = Cl_1(u)^c \cup Cl_1(u)$.\\$\leftarrow$ Now suppose $u^* \vee u^{**} = Cl_1(u)^c \cup Cl_1(Cl_1(u)^c)^c = X \rightarrow Cl_1(u) \subseteq Cl_1(Cl_1(u)^c)^c \rightarrow Cl_1(Cl_1(u)^c) \subseteq Cl_1(u)^c \rightarrow Cl_1(Cl_1(u)^c) = Cl_1(u)^c$ and hence $Cl_1(u)^c$ is $t_1$ clopen.
  \end{enumerate}
    \item $Cl_2(u^c)^c$ is $t_2$ clopen $\leftrightarrow u^+ \wedge u^{++} = Cl_2(u^c) \cap Cl_2(Cl_2(u^c)^c) = \emptyset$.
  \begin{enumerate}
    \item $\rightarrow Cl_2(u^c)^c \subseteq Cl_2(Cl_2(u^c)^c)$ with equality if $Cl_2(u^c)^c = (u^+)^c = (Int_2(u)^c)^c = Int_2(u)$ is $t_2$ clopen, hence $u^+ \wedge u^{++} = Cl_2(u^c) \cap Cl_2(u^c)^c$.\\$\leftarrow$ Now suppose $x^+ \wedge x^{++} = Cl_2(u^c) \cap Cl_2(Cl_2(u^c)^c) = \emptyset \rightarrow Cl_2(Cl_2(u^c)^c) \subseteq Cl_2(u^c)^c \rightarrow Cl_2(u^c)^c = Cl_2(Cl_2(u^c)^c)$ and hence $Cl_2(u^c)^c$ is $t_2$ clopen.
  \end{enumerate}
\end{enumerate}
\end{proof}
\begin{note} If $(B_1,\cup,\cap,\emptyset,X)$ is a double Stone algebra the following hold:
  \begin{enumerate}
    \item $\{u^* | u \in B_1\} = \{u^+ | u \in B_1\}$ \cite{duns}
    \item $u^*$ is $t_1$ clopen, $u^*$ and $u^{**}$ are complementary.
    \item $(u^+)^c$ is $t_2$ clopen, $u^+$ and $u^{++}$ are complementary.
    \item $u^{**}=Cl_1(u) \in B_1$ and is $t_1$ clopen/complemented.
  \end{enumerate}
\end{note}
Next we will examine the concepts of density in $B_1$ as a double Stone algebra.
\begin{lemma}\label{coreprop}
  Let $(B_1,\cup,\cap,\emptyset,X)$ be a double Stone algebra, then
  \begin{enumerate}
    \item $u \in D(B_1) \leftrightarrow Bd_1(u) = u^c$
    \item $u \in \overline{D(B_1)} \leftrightarrow Bd_2(u^c) = u$
  \end{enumerate}
\end{lemma}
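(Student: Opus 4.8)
The plan is to translate the algebraic notions of \emph{dense} and \emph{dually dense} into topological statements via the closure/interior descriptions of $^*$ and $^+$ recorded above, and then to exploit the fact that every member of $B_1 = t_1 \cap \delta_2$ is simultaneously $t_1$-open and $t_2$-closed.

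For (1), recall from Definition \ref{dense} that $u \in D(B_1)$ iff $u^* = \emptyset$ (the bottom of $B_1$), together with the identity $u^* = Cl_1(u)^c$; hence $u \in D(B_1) \leftrightarrow Cl_1(u) = X$. Since $u$ is $t_1$-open we have $Int_1(u) = u$ and $Cl_1(u^c) = u^c$, so $Bd_1(u) = Cl_1(u) \cap Cl_1(u^c) = Cl_1(u) \setminus u$. If $Cl_1(u) = X$ this gives $Bd_1(u) = X \setminus u = u^c$; conversely, $Bd_1(u) = u^c$ forces $Cl_1(u) \supseteq u^c$, which combined with the trivial inclusion $u \subseteq Cl_1(u)$ yields $Cl_1(u) = X$. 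This establishes the first equivalence.

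For (2), by Definition \ref{dense}, $u \in \overline{D(B_1)}$ iff $u^+ = X$, and we have the identity $u^+ = Cl_2(u^c)$; hence $u \in \overline{D(B_1)} \leftrightarrow Cl_2(u^c) = X$. Since $u \in \delta_2$, $u^c$ is $t_2$-open, so $Int_2(u^c) = u^c$ and $Cl_2(u) = u$, whence $Bd_2(u^c) = Cl_2(u^c) \cap Cl_2(u) = Cl_2(u^c) \setminus u^c$. Running the same set-theoretic argument as in (1), now with $u^c$ in place of $u$, shows $Bd_2(u^c) = u \leftrightarrow Cl_2(u^c) = X$, which completes the proof.

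The only point requiring any care — and the closest thing to an obstacle — is making sure the ambient set in the complement/boundary computations really is $X$ and that $u$ is $t_1$-open while $u^c$ is $t_2$-open; both are immediate from $B_1 = t_1 \cap \delta_2$, so no genuine difficulty arises and the lemma reduces to the two short displayed chains above.
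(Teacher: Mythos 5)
Your proposal is correct and follows essentially the same route as the paper's proof: both translate density into $Cl_1(u)=X$ via $u^*=Cl_1(u)^c$ (dually $u^+=Cl_2(u^c)$) and then use the decomposition $Cl_1(u)=u\cup Bd_1(u)$ for the $t_1$-open set $u$ (dually for the $t_2$-open set $u^c$) to obtain the boundary characterizations. You merely spell out the disjointness and converse-inclusion steps that the paper compresses into a single displayed chain.
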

\begin{proof} 
Suppose $u \in B_1$.
  \begin{enumerate}
    \item $u^* =  \emptyset = Cl_1(u)^c \leftrightarrow Cl_1(u) = X = Bd_1(u) \cup u \leftrightarrow Bd_1(u) = u^c$.
    \item $u^+ = X = Cl_2(u^c) = Bd_2(u^c) \cup u^c \leftrightarrow Bd_2(u^c) = u$.
  \end{enumerate}
\end{proof}
We note that these elements have the property that there closure is all of $X$ in either $t_1$ or $t_2$. We now examine the core in terms of $t_1$ and $t_2$. 
\begin{corollary}\label{coren}
  Let $(B_1,\cup,\cap,\emptyset,X)$ be a double Stone algebra, then\\$k(B_1)=\{u \in B_1 | Bd_1(u) = u^c$ and $Bd_2(u^c) = u\}$. 
\end{corollary}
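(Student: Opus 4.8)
The plan is to read this off directly from Lemma~\ref{coreprop} together with the algebraic definition of the core. Recall from Definition~\ref{core} that the core of a double Stone algebra $L$ is $K(L) = D(L) \cap \overline{D(L)}$; specializing to the double Stone algebra $(B_1,\cup,\cap,\emptyset,X)$ of the hypothesis, this reads $k(B_1) = D(B_1) \cap \overline{D(B_1)}$, so the only task is to translate each of the two factors into topological language.

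First I would fix $u \in B_1$ and invoke part~(1) of Lemma~\ref{coreprop}, which gives $u \in D(B_1) \leftrightarrow Bd_1(u) = u^c$, and then part~(2) of the same lemma, which gives $u \in \overline{D(B_1)} \leftrightarrow Bd_2(u^c) = u$. These two equivalences are available precisely because $(B_1,\cup,\cap,\emptyset,X)$ is assumed to be a double Stone algebra, which is exactly the standing hypothesis, so no side conditions need to be verified. Conjoining the two equivalences yields $u \in k(B_1) = D(B_1) \cap \overline{D(B_1)}$ if and only if $Bd_1(u) = u^c$ and $Bd_2(u^c) = u$, which is the displayed description.

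There is no genuine obstacle in this corollary; it is a bookkeeping consequence of the preceding lemma. The one point worth flagging in a closing remark is that the set on the right-hand side may well be empty for a general double Stone algebra base, and it is the supplementary demand $k(B_1) \neq \emptyset$ that singles out a core regular double Stone algebra base among double Stone algebra bases --- this is condition~(6) in A~Main~Theorem1 and it is the place where the ``nearly Boolean'' core element $S$ enters the topological picture.
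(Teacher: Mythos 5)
Your proposal is correct and matches the paper's own argument, which likewise derives the corollary directly from Lemma~\ref{coreprop} together with the definition of the core $k(B_1)=D(B_1)\cap\overline{D(B_1)}$. You have simply written out the bookkeeping the paper leaves implicit, so nothing further is needed.
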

\begin{proof}
This result follows directly from Lemma \ref{coreprop} and the definition of $k(B_1)$.
\end{proof}
Now we will examine the condition of regularity.
\begin{note}\label{regn}
Let $(B_1,\cup,\cap,\emptyset,X)$ be a double Stone algebra and $u,w \in B_1$, then $u^*=w^*$ and $u^+=w^+ \rightarrow u=w$ is equivalent to\\$Cl_1(u)^c = Cl_1(w)^c$ and $Cl_2(u^c) = Cl_2(w^c) \rightarrow u = w$.\\We further note this is equivalent to\\$Cl_1(u) = Cl_1(w)$ and $Int_2(u) = Int_2(w) \rightarrow u = w$.
\end{note}
We now have enough to list the necessary and sufficient conditions for a pairwise zero-dimensional space $(X,t_1,t_2)$ to have a core regular double Stone algebra $(B_1,\cup,\cap,\emptyset,X)$.
\begin{main1}
  A pairwise zero-dimensional space $(X,t_1,t_2)$ will have a core regular double Stone algebra $(B_1,\cup,\cap,\emptyset,X) \leftrightarrow$
  \begin{enumerate}
  \item $v \in B_2 \rightarrow Int_1(v) \in B_1$, affording $u^* = Int_1(u^c) = Cl_1(u)^c \in B_1$, the pseudocomplement.
  \item $u \in B_1 \rightarrow Int_2(u) \in B_2$, affording $u^+ = Int_2(u)^c = Cl_2(u^c) \in B_1$, the dual pseudocomplement.
  \item $u \in B_1 \rightarrow Cl_1(u)^c = u^*$ is clopen in $t_1$, affording the Stone identity $u^* \wedge u^{**} = X$ and $u^*$ being complemented.
  \item $u \in B_1 \rightarrow Cl_2(u^c)^c = Int_2(u) = (u^+)^c$ is clopen in $t_2$, affording the Stone Identity $u^+ \wedge u^{++} = \emptyset$ and $u^+$ being complemented.
  \item For any $u,w \in B_1$, $Cl_1(u) = Cl_1(w)$ and $Int_2(u) = Int_2(w) \rightarrow u = w$.
  \item $k(B_1)=\{u \in B_1 | Bd_1(u) = u^c$ and $Bd_2(u^c) = u\} \neq \emptyset$.
  \end{enumerate}
  For the purposes of this note we will call any such pairwise zero-dimensional space $(X,t_1,t_2)$ a core regular double pairwise zero-dimensional space. 
\end{main1}
We note that these conditions are indicative of the \textit{nearly Boolean} CRDSA are.(Recall given any topological space X, the collection of subsets of X that are clopen (both closed and open) is a Boolean algebra.) For example, if $u \in B_1$ is not $t_1$ clopen/complemented then $Cl_1(u)\in B_1$ and is $t_1$ clopen/complemented. We note we have only use the fact that $(X,t_1,t_2)$ is a pairwise zero-dimensional space to enforce the desired structure on $(B_1,\cup,\cap,\emptyset,X)$ and assume the further conditions of a pairwise Stone space may be necessary for morphisms of any dual equivalence. We now will now continue towards showing the category of core regular double Stone algebras is dually equivalent to what we call the category of core regular double pairwise Stone spaces. 

\section{Establishing Duality}\label{estdual} 
In this section we begin our work towards refining the dual equivalence between DLat and PStone to only consider those bounded distributive lattices that are actually core regular double Stone algebras. We begin this section with the following Lemma derived from Theorem \ref{DlattoPSS}:   
 
\begin{mainl1}
If L is a core regular double Stone algebra then $(B_+,\cap,\cup,\emptyset,X)$ is also a core regular double Stone algebra isomorphic to L. 
\end{mainl1}
\begin{proof} In this proof we will designate $\Phi_+$ by $\Phi$ and replace $B_+$ with $B_1$ as $^+$ indicates dual-pseudocomplementation.
  \begin{enumerate}
  \item Let $^*: L \rightarrow L$ be the pseudocomplement operation in L, dually for $^+$, and let $x,y \in L$ and $u,w \in B_1$.
    \begin{enumerate}
    \item First we show $(B_1,\cap,\cup,\emptyset,X)$ is a double p-algebra and $\Phi$ is compatible with $^*$ and $^+$. $\Phi(u \wedge u^*)=\Phi(u) \wedge \Phi(u^*) = \emptyset$ and $\Phi(u^*)$ is minimal and exists $\forall u \in B_1$ by the fact that $\Phi$ is lattice isomorphism. Hence $B_1$ has a pseudocomplement operation and $\Phi(u^*) = \Phi(u)^*$. The dual argument works for $^+$ and hence $(B_1,\cap,\cup,\emptyset,X)$ is a double p-algebra.
    \item The Stone identies follow as $u^* \vee u^{**} = \Phi(u^*) \vee \Phi(u^{**}) = X$ and $u^+ \vee u^{++} = \Phi(u^+) \wedge  \Phi(u^{++}) = \emptyset$.
    \item Regularity also follows easily as $\forall u,w \in B_1$ $u = \Phi(x)$ and $w = \Phi(y)$ for some $x,y \in L$. Suppose  $u^*=w^*$ and $u^+=w^+$, then  $\Phi(x^*) = \Phi(y^*)$ and $\Phi(x^+) = \Phi(y^+) \rightarrow x = y \rightarrow u = w$ by the regularity of L and the fact that $\Phi$ is lattice isomorphism.
    \item If $k \in k(L)$ then $\Phi(k^*) = \emptyset$ and $\Phi(k^+) = X$ and hence $\Phi(k) \in k(B_1)$. 
    \end{enumerate}
  \end{enumerate}
  The above shows that $\Phi$ is compatible with $^*$ and $^+$, $(B_1,\cap,\cup,\emptyset,X)$ is a CRDSA, and $\Phi$ preserves core, hence $\Phi$ is a CRDSA isomorphism. 
\end{proof}

To help motivate the need for refining bi-continuous maps we recall the following definition and result:

\begin{definition}
Let A be a regular double Stone algebra. An element $a \in A$ is called central if $a^*=a^+$. The set of all central elements of A is called the center of A and is denoted by C(A); that is $C(A)=\{a \in A|a^*=a^+\}=\{a^* |a \in a\}=\{a^+ |a \in a\}$.  
\end{definition}

\begin{theorem}
  Let A be a regular double Stone algebra, then C(A) is a Boolean sub-algebra of A with respect to the induced operations $\wedge$, $\vee$, and $^*$.\cite{CRDSA}
\end{theorem}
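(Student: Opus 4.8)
The statement is Theorem~\ref{centerboolsub}, here re-cited from \cite{CRDSA}, and the plan is a direct argument of Glivenko--Frink type. I would show that $C(A)=\{a\in A\mid a^*=a^+\}$, equipped with the \emph{restrictions} of $\wedge$, $\vee$ and $^*$, is a bounded distributive lattice in which every element is complemented, and then invoke the elementary fact that a complemented distributive lattice is a Boolean algebra, with complementation necessarily equal to the (unique) lattice complement. The tools I would use are the identities valid in any double $p$-algebra, $(a\vee b)^*=a^*\wedge b^*$ and $(a\wedge b)^+=a^+\vee b^+$, together with their companions $(a\wedge b)^*=a^*\vee b^*$ and $(a\vee b)^+=a^+\wedge b^+$, the latter pair holding precisely because $A$ satisfies the Stone identities of Definition~\ref{DSA}.

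First I would note that $0,1\in C(A)$, since $0^*=0^+=1$ and $1^*=1^+=0$. Next, for $a,b\in C(A)$ the four identities give $(a\vee b)^*=a^*\wedge b^*=a^+\wedge b^+=(a\vee b)^+$ and $(a\wedge b)^*=a^*\vee b^*=a^+\vee b^+=(a\wedge b)^+$, so $C(A)$ is closed under $\vee$ and $\wedge$; the second computation is the one place the hypothesis ``double \emph{Stone}'' is genuinely used. Finally, if $a\in C(A)$ then $a\wedge a^*=0$ and $a\vee a^*=a\vee a^+=1$, so $a^*$ is a complement of $a$ in $A$; distributivity then forces $a^{**}=a$ and, dually, $(a^*)^+=a$, whence $(a^*)^*=a=(a^*)^+$ and $a^*\in C(A)$. (Closure under $^*$ is also immediate from the description $C(A)=\{a^*\mid a\in A\}$ of Definition~\ref{center}.) Hence $(C(A);\wedge,\vee,{}^*,0,1)$ is a bounded sublattice of $A$, so distributive, in which each element $a$ has complement $a^*\in C(A)$; being a complemented distributive lattice it is a Boolean algebra, and by uniqueness of complements the operation $^*$ restricted to $C(A)$ is its Boolean complement. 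This is the assertion.

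I do not expect a genuine obstacle here --- the content is bookkeeping --- but one point warrants care: the Boolean join and meet on $C(A)$ must be the literal restrictions of $\vee$ and $\wedge$, rather than the ``skeleton'' operations $a\sqcup b=(a^*\wedge b^*)^*$ that intervene in the usual Glivenko--Frink theorem for Stone algebras, and this is exactly what the closure computations in the second paragraph secure. Regularity of $A$ is the setting in which the centre is \emph{defined} in \cite{CRDSA} (it is what makes $\{a\mid a^*=a^+\}$ coincide with $\{a^*\mid a\in A\}=\{a^+\mid a\in A\}$), but it plays no further role in the verification above; alternatively, since this is Theorem~\ref{centerboolsub}, one may simply cite \cite{CRDSA}.
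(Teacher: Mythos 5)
Your proof is correct, but there is nothing in the paper to compare it against: Theorem~\ref{centerboolsub} is simply quoted from \cite{CRDSA} and never proved in this note, so your argument is a self-contained verification rather than an alternative to an internal proof. The steps all check: $0,1\in C(A)$; the De~Morgan laws $(a\vee b)^*=a^*\wedge b^*$, $(a\wedge b)^+=a^+\vee b^+$ (any double $p$-algebra) together with $(a\wedge b)^*=a^*\vee b^*$, $(a\vee b)^+=a^+\wedge b^+$ (Stone identities) give closure under $\vee$ and $\wedge$; and the distributivity computation $a^{**}=a^{**}\wedge(a\vee a^*)=a^{**}\wedge a\le a$, with its dual, yields $(a^*)^*=a=(a^*)^+$, hence closure under $^*$ and complementedness, so $C(A)$ is a complemented bounded distributive sublattice, i.e.\ a Boolean algebra with $^*$ as complementation. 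One refinement: the Stone identities are not actually indispensable even where you invoke them --- since each $a\in C(A)$ is complemented by $a^*$, uniqueness of complements in a distributive lattice already forces $(a\wedge b)^*=a^*\vee b^*=(a\wedge b)^+$ --- so the result holds for regular double $p$-algebras with centre defined by $a^*=a^+$; this only strengthens your observation that regularity, too, plays no role beyond the definition of $C(A)$.
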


\begin{note}
Let $(Y,\gamma_1,\gamma_2)$ be a bitopological space with bases $D_1$ and $D_2$, respectively. In \textit{Bitopological duality for distributive lattices and Heyting algebras}, it is shown that if $f:X \rightarrow Y$ is a bi-continuous map then $f^{-1}:D_1 \rightarrow B_1$ is a bounded lattice homomorphism. We further note that $C(B_1)$ is exactly the set of clopen subsets of $B_1$ and for a topological space $Y$ we denote the set of clopen subsets of $Y$ by $Clo(Y)$.
\end{note}
   The following results are easily shown:
\begin{lemma}
Let $f:M \rightarrow N$ be a lattice homorphism between two Boolean algebras $M$ and $N$ then the following are equivalent:
\begin{enumerate}
  \item $f(0)=0$ and $f(1)=1$
  \item $f(a')=f(a)'$ for all $a \in M$
\end{enumerate}
\end{lemma}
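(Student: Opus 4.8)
The plan is to reduce both implications to one elementary fact: in a Boolean algebra complements are unique. I would open by recording this in $N$: if $b,c \in N$ both satisfy $f(a) \wedge b = f(a) \wedge c = 0$ and $f(a) \vee b = f(a) \vee c = 1$, then distributivity of $N$ gives $b = b \wedge (f(a) \vee c) = (b \wedge f(a)) \vee (b \wedge c) = b \wedge c$ and symmetrically $c = b \wedge c$, so $b = c$. Everything else is a short computation using only that $f$ preserves the binary operations $\wedge$ and $\vee$.

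For $(1) \Rightarrow (2)$, fix $a \in M$ and assume $f(0) = 0$, $f(1) = 1$. Since $f$ is a lattice homomorphism, $f(a) \wedge f(a') = f(a \wedge a') = f(0) = 0$ and $f(a) \vee f(a') = f(a \vee a') = f(1) = 1$, so $f(a')$ is \emph{a} complement of $f(a)$ in $N$; by the uniqueness just established, $f(a') = f(a)'$. For $(2) \Rightarrow (1)$, pick any element $a \in M$ (the underlying set is nonempty) and compute $f(0) = f(a \wedge a') = f(a) \wedge f(a') = f(a) \wedge f(a)' = 0$, and dually $f(1) = f(a \vee a') = f(a) \vee f(a)' = 1$.

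There is no real obstacle here; the argument is routine. The only points deserving a word of care are that ``lattice homomorphism'' must be read as preserving both $\wedge$ and $\vee$ (so that $a \wedge a'$ and $a \vee a'$ are carried to the corresponding expressions in $N$), that the symbols $0,1$ and the prime $'$ refer to the bounds and complementation of $M$ or $N$ as appropriate, and that in the direction $(2) \Rightarrow (1)$ we are genuinely \emph{deriving} preservation of bounds rather than assuming it — which is fine, since $f(a')$ in hypothesis (2) already presupposes that $N$ is Boolean. This lemma is exactly what is needed to see that the bounded lattice homomorphism $f^{-1}$ restricts on centers to a Boolean homomorphism, matching $C(B_1) = Clo(X)$ with $C(D_1) = Clo(Y)$.
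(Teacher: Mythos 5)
Your argument is correct and follows essentially the same route as the paper's proof: both directions are the same short computations using that $f$ preserves $\wedge$ and $\vee$ applied to $a \wedge a' = 0$ and $a \vee a' = 1$. The only difference is that you spell out the uniqueness-of-complements step in $N$ (which the paper's direction $(1)\Rightarrow(2)$ leaves implicit), a welcome clarification but not a different approach.
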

\begin{proof} We simply utilize the fact that $f$ is compatible with $\wedge$ and $\vee$.
\begin{enumerate}
	\item Suppose $f(0)=0$ then $f(a \wedge a')=f(0)=0$, the other result follows similarly.
	\item Suppose $f(a')=f(a)'$ then $f(0)=f(a \wedge a')=0$, the other result follows similarly.
\end{enumerate}
\end{proof}
Using the Lemma above and the fact that inverse images respect union and intersection we derive a well known result from the Stone duality:
\begin{lemma}
let $f:X \rightarrow Y$ be a continous map, then $f^{-1}|_{Clo(X)}:Clo(X) \rightarrow Clo(Y)$ is a Boolean algebra homomorphism. 
\end{lemma}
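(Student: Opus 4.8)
The plan is to check directly that $f^{-1}$ carries clopen sets to clopen sets, that it respects the Boolean set operations, and then to upgrade "lattice homomorphism" to "Boolean algebra homomorphism" by citing the immediately preceding Lemma. I would first note a small bookkeeping point: since $f:X\to Y$, the natural object here is the \emph{contravariant} restriction $f^{-1}|_{Clo(Y)}:Clo(Y)\to Clo(X)$, and this is what the statement intends. Well-definedness is the only place continuity is used: if $V\in Clo(Y)$ then $f^{-1}(V)$ is open because $V$ is open and $f$ is continuous, and $f^{-1}(V)=X\setminus f^{-1}(Y\setminus V)$ is also open because $Y\setminus V$ is open; hence $f^{-1}(V)\in Clo(X)$.

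Next I would invoke the standard set-theoretic identities $f^{-1}(V\cup W)=f^{-1}(V)\cup f^{-1}(W)$, $f^{-1}(V\cap W)=f^{-1}(V)\cap f^{-1}(W)$, together with $f^{-1}(\emptyset)=\emptyset$ and $f^{-1}(Y)=X$. These say precisely that $f^{-1}|_{Clo(Y)}$ is a bounded lattice homomorphism between the Boolean algebras $Clo(Y)$ and $Clo(X)$ (recall from the excerpt that the clopen subsets of any space form a Boolean algebra).

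Finally I would apply the preceding Lemma with $M=Clo(Y)$, $N=Clo(X)$, and $f^{-1}|_{Clo(Y)}$ in the role of the lattice homomorphism: since it sends $0$ to $0$ and $1$ to $1$, the Lemma gives $f^{-1}(Y\setminus V)=X\setminus f^{-1}(V)$ for all $V\in Clo(Y)$, i.e.\ it preserves complementation as well. Hence $f^{-1}|_{Clo(Y)}$ is a Boolean algebra homomorphism. There is no real obstacle here; the single substantive observation is that a continuous map pulls clopen sets back to clopen sets (using continuity against both open and closed sets), and the rest is the routine fact that preimages commute with the Boolean operations.
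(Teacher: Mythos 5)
Your argument is correct and follows essentially the same route the paper intends: the preceding Lemma (bound-preservation implies complement-preservation for lattice homomorphisms of Boolean algebras) combined with the fact that preimages commute with unions, intersections, $\emptyset$ and the whole space. You also rightly fix the direction (the map is $f^{-1}|_{Clo(Y)}:Clo(Y)\rightarrow Clo(X)$) and make explicit the well-definedness step that continuity pulls clopen sets back to clopen sets, both of which the paper leaves implicit.
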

So, utilizing Theorem \ref{centerboolsub} without much work we see the following holds:
\begin{corollary}
$f^{-1}|_{C(D_1)}:C(D_1) \rightarrow C(B_1)$ is a Boolean algebra homomorphism.
\end{corollary}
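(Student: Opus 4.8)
The plan is to glue together the three facts just recorded. Since $(X,t_1,t_2)$ and $(Y,\gamma_1,\gamma_2)$ carry CRDSA bases, $(B_1,\cup,\cap,\emptyset,X)$ and $(D_1,\cup,\cap,\emptyset,Y)$ are in particular regular double Stone algebras, so Theorem~\ref{centerboolsub} says $C(B_1)$ and $C(D_1)$ are Boolean algebras under the induced operations $\cup$, $\cap$, $^*$; and by the note above these centers are exactly the clopen members of $B_1$ (clopen in $t_1$) and of $D_1$ (clopen in $\gamma_1$), with bounds $\emptyset, X$ and $\emptyset, Y$ respectively.

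First I would check that $f^{-1}$ actually restricts to a map $C(D_1)\to C(B_1)$. Given $v\in C(D_1)$: by the result of \cite{guram} cited in the note, $f^{-1}(v)\in B_1$; and since $f$ is bi-continuous, in particular continuous from $(X,t_1)$ to $(Y,\gamma_1)$, the preimage $f^{-1}(v)$ of the $\gamma_1$-clopen set $v$ is $t_1$-clopen, hence lies in $C(B_1)$. So $f^{-1}|_{C(D_1)}:C(D_1)\to C(B_1)$ is well defined.

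Then I would finish as follows. The restriction $f^{-1}|_{C(D_1)}$ preserves finite unions and intersections because the ambient map $f^{-1}:D_1\to B_1$ is a bounded lattice homomorphism, and it sends $\emptyset\mapsto\emptyset$ and $Y\mapsto X$; so it is a $\{0,1\}$-preserving lattice homomorphism between the Boolean algebras $C(D_1)$ and $C(B_1)$. By the lemma above such a map preserves complements (equivalently: preimages commute with set-theoretic complementation, and on clopen sets that complement is the Boolean complement computed in the center). Hence $f^{-1}|_{C(D_1)}$ respects $\cup$, $\cap$, $^*$, $\emptyset$ and the top element, i.e. it is a Boolean algebra homomorphism. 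The only step that is not pure bookkeeping is the well-definedness claim, and even that is immediate once one combines the identification of the center with the clopen base elements with the bi-continuity of $f$; I do not anticipate any genuine obstacle.
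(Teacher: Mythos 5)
Your proposal is correct and follows essentially the same route as the paper: identify $C(D_1)$ and $C(B_1)$ as Boolean algebras of clopen base elements via Theorem~\ref{centerboolsub} and the preceding note, use the cited fact from \cite{guram} that $f^{-1}:D_1\rightarrow B_1$ is a bounded lattice homomorphism together with (bi-)continuity to see the restriction is well defined, and invoke the $0,1$-lemma to get preservation of complements. The paper leaves these steps implicit (``without much work''), so your write-up is just a fuller version of the intended argument.
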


However, that does not deal with compatibility of $f^{-1}$ with $^*$ and $^+$ for $d\notin C(D_1)$. For that we need the following:

\begin{lemma}\label{inv2phom}
Let $(X,t_1,t_2)$ and $(Y,\gamma_1,\gamma_2)$ be a bitopological space with CRDSA bases $B_1$, $B_2$ and $D_1$, $D_2$, respectively, $f:X \rightarrow Y$ be a bi-continuous map and $u,w \in D_1$. Then the following hold:
\begin{enumerate}
  \item $f^{-1}(u^*)=f^{-1}(Cl_1(u)^c)=f^{-1}(u)^*=Cl_1(f^{-1}(u))^c \leftrightarrow f^{-1}(Bd_1(u)) \subseteq Cl_1(f^{-1}(u))$. 
  \item $f^{-1}(u^+)=f^{-1}(Cl_2(u^c))=f^{-1}(u)^+=Cl_2(f^{-1}(u)^c) \leftrightarrow f^{-1}(Bd_2(u^c)) \subseteq Cl_2(f^{-1}(u^c))$.
\end{enumerate}
\end{lemma}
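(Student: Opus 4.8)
The plan is to prove each of the two biconditionals by unwinding the definitions of $^*$ and $^+$ in the bases $B_1$ and $D_1$ (using the characterizations established in Section~\ref{B1struct}, namely $v^* = Cl_1(v)^c$ and $v^+ = Cl_2(v^c)$ for base elements), and then reducing the question of whether $f^{-1}$ commutes with these unary operations to a purely topological containment statement about boundaries. I will treat statement (1) in full and remark that (2) follows by the dual argument, swapping the roles of $t_1,t_2$ and replacing $u$ by $u^c$ throughout.

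For (1): since $f$ is bi-continuous, $f^{-1}$ is a bounded lattice homomorphism $D_1 \to B_1$ (the cited result from \cite{guram}), and inverse images commute with complementation, so $f^{-1}(u^*) = f^{-1}(Cl_1(u)^c) = f^{-1}(Cl_1(u))^c$. On the other side, $f^{-1}(u)^* = Cl_1(f^{-1}(u))^c$. Thus the equality $f^{-1}(u^*) = f^{-1}(u)^*$ is equivalent to $f^{-1}(Cl_1(u)) = Cl_1(f^{-1}(u))$. Continuity of $f$ with respect to $t_1$ already gives $Cl_1(f^{-1}(u)) \subseteq f^{-1}(Cl_1(u))$ (the standard fact that $f(Cl\,A) \subseteq Cl\,f(A)$, transported to preimages), so only the reverse inclusion $f^{-1}(Cl_1(u)) \subseteq Cl_1(f^{-1}(u))$ is at stake. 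Now decompose $Cl_1(u) = u \cup Bd_1(u)$ (disjointly, since $u \in B_1$ is $t_1$-open). Then $f^{-1}(Cl_1(u)) = f^{-1}(u) \cup f^{-1}(Bd_1(u))$, and since $f^{-1}(u) \subseteq Cl_1(f^{-1}(u))$ trivially, the inclusion $f^{-1}(Cl_1(u)) \subseteq Cl_1(f^{-1}(u))$ holds if and only if $f^{-1}(Bd_1(u)) \subseteq Cl_1(f^{-1}(u))$. Chaining these equivalences yields statement (1).

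The only genuinely delicate point is the direction of the standard continuity inclusion and making sure it lands the right way: one must verify that $t_1$-continuity of $f$ gives $Cl_1(f^{-1}(u)) \subseteq f^{-1}(Cl_1(u))$ and not the reverse, so that the ``extra'' hypothesis is exactly the reverse containment restricted to the boundary. This is routine but deserves a line of care, because the whole content of the lemma is that bi-continuity alone is \emph{not} enough to force $f^{-1}$ to respect $^*$ and $^+$ — the boundary condition is the precise obstruction, reflecting the ``nearly Boolean'' slogan that morphisms must behave well on the boundaries of non-clopen base elements. Statement (2) is obtained mutatis mutandis: $f^{-1}(u^+) = f^{-1}(Cl_2(u^c)) = f^{-1}(Cl_2(u^c))$, $f^{-1}(u)^+ = Cl_2(f^{-1}(u)^c) = Cl_2(f^{-1}(u^c))$, $t_2$-continuity gives $Cl_2(f^{-1}(u^c)) \subseteq f^{-1}(Cl_2(u^c))$, and writing $Cl_2(u^c) = u^c \cup Bd_2(u^c)$ reduces the missing inclusion to $f^{-1}(Bd_2(u^c)) \subseteq Cl_2(f^{-1}(u^c))$.
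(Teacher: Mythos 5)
Your proof is correct and follows essentially the same route as the paper: both reduce the equality $f^{-1}(u^*)=f^{-1}(u)^*$ to $f^{-1}(Cl_1(u))=Cl_1(f^{-1}(u))$ and then split $Cl_1(u)=u\cup Bd_1(u)$ so that the boundary containment is exactly what is at stake, with part (2) handled dually. The only cosmetic difference is that you obtain the automatic inclusion $Cl_1(f^{-1}(u))\subseteq f^{-1}(Cl_1(u))$ from $t_1$-continuity, whereas the paper derives the equivalent containment $f^{-1}(u^*)\subseteq f^{-1}(u)^*$ from $f^{-1}$ being a bounded lattice homomorphism together with maximality of the pseudocomplement, and then argues the reverse inclusion pointwise.
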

\begin{proof}
Recall $f^{-1}:D_1 \rightarrow B_1$ is known to be a bounded lattice homomorphism.\cite{guram}
\begin{enumerate}
  \item 
     \begin{enumerate}
       \item $\leftarrow f^{-1}(u^* \wedge u)=f^{-1}(u^*) \wedge f^{-1}(u)=f^{-1}(u)^* \wedge f^{-1}(u)=\emptyset$, hence $f^{-1}(u^*) \subseteq f^{-1}(u)^*$.\\ Now let $x \in f^{-1}(u)^* = Cl_1(f^{-1}(u))^c$ and note $f^{-1}(u^*)=f^{-1}(Cl_1(u)^c)=f^{-1}(Cl_1(u))^c$. So suppose $x \in f^{-1}(Cl_1(u))=f^{-1}(Bd_1(u)) \vee f^{-1}(u)$ and consider
	\begin{enumerate}
	  \item $x \in f^{-1}(u)$ contradicts $x \in  Cl_1(f^{-1}(u))^c$.
	  \item $x \in f^{-1}(Bd_1(u)) \rightarrow  x \in Cl_1(f^{-1}(u))$ by supposition, again yielding a contradiction. 
	\end{enumerate}
       \item $\rightarrow$ $f^{-1}(Cl_1(u))^c = Cl_1(f^{-1}(u))^c \leftrightarrow f^{-1}(Cl_1(u)) = Cl_1(f^{-1}(u))$
     \end{enumerate}
  \item
     \begin{enumerate}
       \item $\leftarrow f^{-1}(u^+ \vee u)=f^{-1}(u^+) \vee f^{-1}(u)=f^{-1}(u)^+ \vee f^{-1}(u)=X$, hence $f^{-1}(u)^+ \subseteq f^{-1}(u^+)$.\\ Now let $x \in f^{-1}(u^+) = f^{-1}(Cl_2(u^c))=f^{-1}(Bd_2(u^c)) \vee f^{-1}(u^c)$ and note $f^{-1}(u)^+=Cl_2(f^{-1}(u)^c)=Cl_2(f^{-1}(u^c))$.
       \begin{enumerate}
	\item $x \in  f^{-1}(u^c) \rightarrow x \in Cl_2(f^{-1}(u)^c)$.
	\item $x \in  f^{-1}(Bd_2(u^c)) \rightarrow x \in Cl_2(f^{-1}(u^c))$ by supposition.
       \end{enumerate}
     \item $\rightarrow  f^{-1}(Bd_2(u^c)) \subseteq f^{-1}(u^+) =f^{-1}(u)^+=Cl_2(f^{-1}(u^c))$
    \end{enumerate}
\end{enumerate}
\end{proof}
Now that we have compatibility with $^*$ and $^+$, showing $f^{-1}$ is a core regular double stone algebra homomorphism follows easily.
\begin{corollary}\label{inv2core}
Let f be as in Lemma \ref{inv2phom} and $k_d$ and $k_b$ be the core elements of $D_1$ and $B_1$, respectively, then $f^{-1}(k_d)=k_b$.
\end{corollary}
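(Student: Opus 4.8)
The plan is to use that, under the hypotheses of Lemma~\ref{inv2phom}, the map $f^{-1}\colon D_1\to B_1$ is simultaneously a bounded lattice homomorphism (by \cite{guram}) and compatible with $^*$ and $^+$, and then to pin down $f^{-1}(k_d)$ by the defining property of the core element together with its uniqueness.

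First I would recall that the core element $k_d$ of the CRDSA $D_1$ is the unique element of $D(D_1)\cap\overline{D(D_1)}$; equivalently $k_d^{*}=\emptyset$ (the bottom of $D_1$) and $k_d^{+}=Y$ (the top of $D_1$), and likewise $k_b$ is the unique element of $B_1$ with $k_b^{*}=\emptyset$ and $k_b^{+}=X$. Here uniqueness is precisely the remark, recorded after Definition~\ref{core}, that regularity forces $|k(L)|=1$ for any CRDSA $L$. Next I would compute: since $f\colon X\to Y$ we have $f^{-1}(\emptyset)=\emptyset$ and $f^{-1}(Y)=X$, these being the bottoms and tops of the respective lattices. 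Applying the compatibility statements of Lemma~\ref{inv2phom} with $u=k_d$ gives $f^{-1}(k_d)^{*}=f^{-1}(k_d^{*})=f^{-1}(\emptyset)=\emptyset$ and $f^{-1}(k_d)^{+}=f^{-1}(k_d^{+})=f^{-1}(Y)=X$. Hence $f^{-1}(k_d)$ is both dense and dually dense in $B_1$, i.e. $f^{-1}(k_d)\in D(B_1)\cap\overline{D(B_1)}=k(B_1)$, and since $k(B_1)=\{k_b\}$ we conclude $f^{-1}(k_d)=k_b$.

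I do not expect a genuine obstacle here; all the content sits in Lemma~\ref{inv2phom}, and this corollary is just the bookkeeping step upgrading ``compatible with $\wedge,\vee,^{*},^{+},0,1$'' to ``CRDSA homomorphism'' once the core is regarded as a nullary operation, as anticipated in the note following Definition~\ref{core}. The only point needing a word of care is that one must explicitly invoke the hypotheses of Lemma~\ref{inv2phom} — equivalently the boundary conditions $f^{-1}(Bd_1(u))\subseteq Cl_1(f^{-1}(u))$ and $f^{-1}(Bd_2(u^{c}))\subseteq Cl_2(f^{-1}(u^{c}))$ — to have compatibility of $f^{-1}$ with $^{*}$ and $^{+}$ available; absent these, $f^{-1}$ need not carry $k_d$ to a dense (or dually dense) element of $B_1$ at all.
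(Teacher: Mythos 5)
Your proposal is correct and follows essentially the same route as the paper: apply the compatibility of $f^{-1}$ with $^*$ and $^+$ (from Lemma~\ref{inv2phom}) to get $f^{-1}(k_d)^*=f^{-1}(k_d^*)=\emptyset$ and $f^{-1}(k_d)^+=f^{-1}(k_d^+)=X$, so $f^{-1}(k_d)$ lies in the core of $B_1$, which by regularity is the singleton $\{k_b\}$. Your explicit remark that the boundary conditions of Lemma~\ref{inv2phom} must be in force is a fair clarification of a hypothesis the paper leaves implicit, but it does not change the argument.
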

\begin{proof}
$f^{-1}(k_d)^*=f^{-1}(k_d^*)=\emptyset$ and $f^{-1}(k_d)^+=f^{-1}(k_d^+)=X$.
\end{proof}
We note that the above proof works in general, hence we see no need to require this as part of the definition of a core regular double Stone algebra homomorphism as was done in \cite{CRDSA}. We now have enough to show our main result:

\begin{main2}\label{finvcrsdahom}
Let $(X,t_1,t_2)$ and $(Y,\gamma_1,\gamma_2)$ be a bitopological space with CRDSA bases $B_1$, $B_2$ and $D_1$, $D_2$, respectively, $f:X \rightarrow Y$ be a bi-continuous map and $u,w \in D_1$. The $f^{-1}$ is a CRDSA homomorphism $\leftrightarrow f^{-1}(Bd_1(u)) \subseteq Cl_1(f^{-1}(u))$ and $f^{-1}(Bd_2(u^c)) \subseteq Cl_2(f^{-1}(u^c))$.
\end{main2}

These results are topologically indicative of just how ``nearly Boolean'' CRDSA are, these inverses must respect the appropriate conditions on the boundary of non-clopen elements of the bases of $t_1$ and $t_2$. Now we will draw on results from \textit{Bitopological duality for distributive lattices and Heyting algebras} to establish the dual equivalence we seek. We will rely heavily on section 5. \textit{Distributive lattices and pairwise Stone spaces} of \cite{guram}, where a dual equivalence between the categories of bounded distributive lattices and pariwise Stone spaces is established. We begin with the following definitions:

\begin{definition}
Let $h:L \rightarrow L'$ be a CRDSA homomorphism and $pf(L)$, $pf(L')$ denote the sets of prime filters of $L$, $L'$, respectively. Define $f_h:pf(L') \rightarrow pf(L)$ by $f_h(x)=h^{-1}(x)$.
\end{definition}
\begin{note}
In section 5. of \cite{guram} $f_h$ is shown to be a bi-continuous map and it is also shown that for $a \in L$, $f_h^{-1}(\Phi_+(a))=\Phi_+'(h(a))$.
\end{note}
\begin{lemma}
Let $B_+$ denote the base of $t_+$ on $pf(L)$. $f_h$ satisfies the conditions of the Main Theorem, namely for $b \in B_+$, $f_h^{-1}(Bd_1(b)) \subseteq Cl_1(f_h^{-1}(b))$ and $f_h^{-1}(Bd_2(b^c)) \subseteq f_h^{-1}(b^c))$.
\end{lemma}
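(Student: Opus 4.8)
The plan is to verify the two inclusions directly from the identity $f_h^{-1}(\Phi_+(a)) = \Phi_+'(h(a))$ quoted in the preceding note, using the fact established in Main Lemma1 that $(B_+,\cap,\cup,\emptyset,X)$ is a CRDSA isomorphic to $L$ via $\Phi_+$ (and likewise $B_+'$ via $\Phi_+'$). Since $f_h$ is bi-continuous and both $B_+, B_+'$ are CRDSA bases, the statement is exactly the hypothesis of Main Theorem2 applied to the map $f = f_h$; so it suffices to check the boundary condition for an arbitrary basic element $b = \Phi_+(a) \in B_+$.

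First I would translate the topological boundary operators into algebraic ones inside the CRDSA $B_+$. By Lemma \ref{coreprop} and the computations in Section \ref{B1struct}, for $b = \Phi_+(a)$ one has $Cl_1(b)^c = b^* = \Phi_+(a^*)$ and $Cl_2(b^c) = b^+ = \Phi_+(a^+)$, so $Bd_1(b) = Cl_1(b) \cap (Int_1(b))^c$ and $Bd_2(b^c)$ likewise become expressible through $\Phi_+$ of lattice terms in $a$, $a^*$, $a^{**}$ (using that $b^{**} = Cl_1(b)$ and $b^{++} = Int_2(b)$ from the Note following the double-Stone lemma). The key step is then to push these through $f_h^{-1}$: since $f_h^{-1} = (\cdot)$ precomposed with $h$ on the level of $\Phi_+$, i.e. $f_h^{-1}(\Phi_+(t)) = \Phi_+'(h(t))$ for any lattice-plus-$^*,^+$ term $t$, and since $f_h^{-1}$ is a bounded-lattice homomorphism, the images of $Bd_1(b)$ and $Cl_1(f_h^{-1}(b))$ are both computed as $\Phi_+'$ of terms in $h(a)$. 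Comparing the two terms and using that $h$ preserves $^*$, $^{**}$ (it is a CRDSA homomorphism, the very map $f_h$ came from) gives the desired inclusion; in fact I expect equality-up-to-the-closure rather than a strict inclusion, which is exactly what Lemma \ref{inv2phom} predicts.

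Concretely, for part (1): $f_h^{-1}(Bd_1(b)) \subseteq f_h^{-1}(Cl_1(b)) = \Phi_+'(h(b^{**})) = \Phi_+'(h(b)^{**}) = Cl_1(\Phi_+'(h(b))) = Cl_1(f_h^{-1}(b))$, where the first step is monotonicity of $f_h^{-1}$, the second is the quoted identity, the third is that $h$ is compatible with $^{**}$, and the fourth is the identification $c^{**} = Cl_1(c)$ in the CRDSA base $B_+'$. Part (2) is the dual computation with $^+$, $^{++}$ and $t_2$-closure, using $b^{++} = Int_2(b)$ and $Bd_2(b^c) \subseteq Cl_2(b^c) = $ the relevant $\Phi_+$-term. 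The main obstacle — really the only delicate point — is being careful that the boundary operators, which are defined topologically on $pf(L)$, genuinely agree with the algebraic expressions $b^{**}$, $b^{++}$ etc.; once the dictionary of Section \ref{B1struct} is invoked this is bookkeeping, and the homomorphism property of $h$ does the rest. I would therefore organize the proof as: (i) recall $f_h^{-1}\Phi_+ = \Phi_+' h$; (ii) recall $Cl_1(b) = b^{**}$, $Cl_2(b^c) = b^{++c}$-complement etc. from Section \ref{B1struct}; (iii) chain the inclusions as above for (1), and dually for (2).
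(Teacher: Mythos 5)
Your proposal is correct and follows essentially the same route as the paper: both rest on the identity $f_h^{-1}(\Phi_+(a))=\Phi_+'(h(a))$, the fact (Main Lemma1) that $\Phi_+$ and $\Phi_+'$ are CRDSA isomorphisms, and the compatibility of $h$ with $^*$ and $^+$, the only difference being that you finish with the one-line monotonicity-and-closure chain instead of citing Lemma \ref{inv2phom}, which is the same content repackaged. (Minor slip: in your chain $h(b^{**})$ should read $h(a^{**})$ with $b=\Phi_+(a)$, since $h$ acts on $L$, not on $B_+$.)
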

\begin{proof}
We know from Corollary \ref{subgen1} that $\Phi_+$ is a CRDSA isomorphism so for any $b \in B_+$, $\exists! a \in L$ such that $b=\Phi_+(a)$. So, $f_h^{-1}(b^*)=f_h^{-1}(\Phi_+(a^*))=\Phi_+'(h(a^*))=\Phi_+'(h(a)^*)=\Phi_+'(h(a))^*=f_h^{-1}(\Phi_+(a))^*=f_h^{-1}(b)^*$ and similarly for $^+$. Now the result follows directly from Lemma \ref{inv2phom} and Corollary \ref{inv2core}.
\end{proof}

\begin{note}
In section 5. of \cite{guram} in considering one composition of contravariant functors necessary for dual equivalence it is shown that for a bounded distributive lattice L, we have $L_*$$^* = \Phi_+[L]$ and we already have that $\Phi_+$ is a CRDSA isomorphism from Corollary \ref{subgen1}. We now consider the other necessary composition of functors.
\end{note}

\begin{definition}\label{Psi}
For a pairwise Stone space $(X,t_1,t_2)$, let $\Psi:X \rightarrow X{^*}_*$ be given by $\Psi(x)=\{u \in X^* | x \in u\}$.
\end{definition}

\begin{note}
Again in section 5. of \cite{guram} the situation $(X,t_1,t_2) \rightarrow (B_1,\wedge,\vee,\emptyset,X) \rightarrow X{^*}_*$ is described and the latter map utilized is $\Phi_+$.  It is shown that for $u \in B_1$, $\Psi^{-1}(\Phi_+(u))=u \in B_1$ and $\Psi$ is a bi-homeomorphism.
\end{note}
 
We need a bit more than a bi-homeomorphism, namely we need $\Psi$ to be such that it satisfies the extra conditions of Lemma \ref{inv2phom}. We establish this in the following Lemma:

\begin{lemma}
$\Psi^{-1}$ saisfies the conditions of the Main Theorem, namely for $a \in B_+$ where $B_+$ is the corresponding base of $X{^*}_*$, $\Psi^{-1}(Bd_1(a)) \subseteq Cl_1(\Psi^{-1}(a))$ and $\Psi^{-1}(Bd_2(a^c)) \subseteq Cl_2(\Psi^{-1}(a^c))$. 
\end{lemma}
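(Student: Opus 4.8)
The plan is to recognise that, once restricted to the relevant base, $\Psi^{-1}$ is literally the inverse of $\Phi_+$, so that its compatibility with $^*$ and $^+$ comes for free and the two topological containments are then delivered by the biconditionals already proved in Lemma~\ref{inv2phom}. In the notation of that lemma I take $f=\Psi\colon X\to X{^*}_*$, so the "domain" space is $X$ with CRDSA base $B_1$ and the "codomain" space is $X{^*}_*$ whose base $B_+=\Phi_+[B_1]$ is a CRDSA isomorphic to $B_1$ by Corollary~\ref{subgen1}; thus $\Psi^{-1}$ is the map $B_+\to B_1$ to which the lemma must be applied.

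First I would recall from the preceding note that $\Psi$ is a bi-homeomorphism and that $\Psi^{-1}(\Phi_+(u))=u$ for every $u\in B_1$; equivalently $\Psi^{-1}\circ\Phi_+=\mathrm{id}_{B_1}$, so $\Psi^{-1}|_{B_+}$ coincides with the set-theoretic inverse of the bijection $\Phi_+\colon B_1\to B_+$. By Corollary~\ref{subgen1}, $\Phi_+$ is a CRDSA isomorphism onto $(B_+,\cap,\cup,\emptyset,X{^*}_*)$, hence its inverse $\Phi_+^{-1}=\Psi^{-1}|_{B_+}$ is again a CRDSA isomorphism, and in particular preserves $^*$ and $^+$: for $a=\Phi_+(u)\in B_+$ one gets $\Psi^{-1}(a^*)=\Psi^{-1}(\Phi_+(u^*))=u^*=(\Psi^{-1}(a))^*$ and dually $\Psi^{-1}(a^+)=(\Psi^{-1}(a))^+$.

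Then I would invoke Lemma~\ref{inv2phom} with $f=\Psi$ (legitimate, since $\Psi$ is bi-continuous, being a bi-homeomorphism). Its biconditionals say precisely that $\Psi^{-1}(a^*)=(\Psi^{-1}(a))^*$ is equivalent to $\Psi^{-1}(Bd_1(a))\subseteq Cl_1(\Psi^{-1}(a))$, and $\Psi^{-1}(a^+)=(\Psi^{-1}(a))^+$ is equivalent to $\Psi^{-1}(Bd_2(a^c))\subseteq Cl_2(\Psi^{-1}(a^c))$; since the left-hand identities were just verified for all $a\in B_+$, the desired containments follow. Equivalently, one may simply observe that $\Psi^{-1}$ is a CRDSA homomorphism (indeed isomorphism) and $\Psi$ is bi-continuous, so the claim is exactly the ``$\rightarrow$'' direction of Theorem~\ref{finvcrsdahom}; core-preservation, if wanted, is then automatic (cf. Corollary~\ref{inv2core}).

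I do not expect a genuine obstacle here: the whole argument is a transfer of structure along the identity $\Psi^{-1}\circ\Phi_+=\mathrm{id}$. The one point that needs care is bookkeeping — being certain that the ``base $B_+$'' in the statement is the base of $X{^*}_*$, namely $\Phi_+[B_1]$, and that $\Psi^{-1}$ carries it bijectively back onto the original base $B_1$. That is exactly the content of the cited identity $\Psi^{-1}(\Phi_+(u))=u$, and once it is pinned down the proof is immediate.
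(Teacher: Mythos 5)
Your argument is correct and is essentially the paper's own proof: both use Corollary~\ref{subgen1} to see $\Phi_+$ is a CRDSA isomorphism, the identity $\Psi^{-1}(\Phi_+(u))=u$ to get $\Psi^{-1}(a^*)=u^*=(\Psi^{-1}(a))^*$ and $\Psi^{-1}(a^+)=u^+=(\Psi^{-1}(a))^+$, and then Lemma~\ref{inv2phom} (with Corollary~\ref{inv2core}) to convert these algebraic compatibilities into the stated boundary containments. The extra observation that this is just the forward direction of the Main Theorem is a harmless repackaging of the same step.
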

\begin{proof}
By Corollary \ref{subgen1} $\Phi_+$ is a CRDSA isomorphism between $B_1$ and $B_+$, so $\exists! u \in B_1$ such that $\Phi_+(u)=a$, $\Phi_+(u^*)=\Phi_+(u)^*=a^*$ and $\Phi_+(u^+)=\Phi_+(u)^+=a^+$. So we have that $\Psi^{-1}(a)=\Psi^{-1}(\Phi_+(u))=u$, $\Psi^{-1}(a^*)=\Psi^{-1}(\Phi_+(u^*))=u^*$ and $\Psi^{-1}(a^+)=\Psi^{-1}(\Phi_+(u^+))=u^+$. Now the result follows directly from Lemma \ref{inv2phom} and Corollary \ref{inv2core}.
\end{proof}

Hence we can now claim the following:

\begin{mainc1}\label{crsdadual}
The category of core regular double Stone algebras is dually equivalent to the category of core regular double pairwise Stone spaces.
\end{mainc1}
\begin{proof}
We are using essentially the same functors as in \cite{guram}, we are simply restricting it to CRDSA. Hence this follows directly from the results in section 5. \textit{Distributive lattices and pairwise Stone spaces} of \cite{guram} and the results in this note.
\end{proof}

\section{Conclusion}
In \cite{CRDSA} many useful results regarding the center of a core regular double Stone algebra, CRDSA, that begin to indicate the \textit{nearly Boolean} nature of CRDSA which we focused on here. We started with a model of network security where individual nodes are considered to be in one of 3 states. Let J be any non-empty set of network nodes, not necessarily finite. We define the node set bounded distributive lattice through the pairwise disjoint subsets of J with the well known binary operations of ternary set partitions and note J = {1} is our minimal case. We then showed the resultant bounded distributive lattice is isomorphic to $C_3^J$ where $C_3$ is the 3 element chain CRDSA. We derived that \textbf{every CRDSA is a subdirect product of} $\mathbf{C_3}$, similarly as for Boolean algebras and $C_2$. We used these results along with a few known results to show a main result, namely \textbf{every Boolean algebra is the center of some core regular double Stone algebra, CRDSA}. We then used that result to characterize all subalgebras of a finite core regular double Stone Algebras, namely \textbf{a CRDSA A is a subalgebra of} $\mathbf{C_3^J}$ \textbf{for some finite J} $\mathbf{\leftrightarrow A \cong C_3^K}$ \textbf{for some} $\mathbf{K \leq J}$.\\ \indent Next we showed that $\mathbf{C_3}$ \textbf{is primal} which implies that \textbf{the variety generated by} $\mathbf{C_3}$ \textbf{is dually equivalent to the category of Stone spaces and hence the category of Boolean algebras.} In some sense this is a last step towards our goal of establishing CRDSA as \textit{nearly Boolean}, but leaves us a bit dissatisfied as to our understanding of CRDSA in the dual topological category. Hence we continued by establishing a duality between the category of CRDSA and specifically crafted bi-topological spaces that enables better understanding of the "nearly boolean" nature of CRDSA in the dual category.  More succinctly, duality through  refinement of a pre-established duality of pairwise Stone spaces and bounded distributive lattices \cite{guram}, and we used \cite{guram} as our primary source for bi-topological spaces as well.\\ \indent Towards this end we first established \textbf{necessary and sufficient conditions on a pairwise zero-dimensional space such that it will have a core regular double Stone algebra base}. For the purposes of this note only we called any pairwise zero-dimensional space $(X,t_1,t_2)$ with a core regular double Stone algebra bases $(B_1,\vee,\wedge,\emptyset,X)$ and $(B_2,\vee,\wedge,\emptyset,X)$ a core regular double pairwise zero-dimensional space. We note that these conditions are indicative of the \textit{nearly Boolean} CRDSA are.(Recall given any topological space X, the collection of subsets of X that are clopen (both closed and open) is a Boolean algebra.) For example, if $u \in B_1$ is not $t_1$ clopen/complemented then $Cl_1(u)\in B_1$ and is $t_1$ clopen/complemented. \\ \indent Next we showed that \textbf{the pairwise Stone space derived from a CRDSA L has a base that is a CRDSA isomorphic to L}. For the purposes of this note only we called any such pairwise Stone space a core regular double pairwise Stone space. Then we establish \textbf{necessary and sufficient conditions for a bi-continuous map to have an inverse that is a CRDSA homomorphism}. These results are topologically indicative of just how ``nearly Boolean'' CRDSA are, these inverses must respect the appropriate conditions on the boundary of non-clopen elements of the bases of $t_1$ and $t_2$. \\ \indent From that result we validate the claim that \textbf{the category of core regular double Stone algebras is dually equivalent to the category of core regular double pairwise Stone spaces}. We note that the conditions for this duality should be able to be easily relaxed to yield a duality for a less rigid subclass of bounded distributive lattices than CRDSA, bounded distributive pseudo-complemented lattices for example.


\begin{thebibliography}{9}

   \bibitem{BS}
      S. Burris, H. Sankappanavar \emph{A Course in Universal Algebra}, publicly released 2012, www.math.uwaterloo.ca/$\sim$ snburris

   \bibitem{CRDSA}
     A R J Srikanth, R V G Ravi Kumar \emph{Centre of Core Regular Double Stone Algbera}, European Journal of Pure and Applied Mathematics, Vol. 10, No. 4, 2017, pgs. 717-729

   \bibitem{comer}
     S. Comer \emph{On Connections Between Information Systems, Rough Sets and Algebraic Logic}, Algebraic Methods in Logic and Computer Science Banach Center Publicaions, Vol. 28, Institute of Mathematics Polish Academy of Sciences Warszawa, 1993

   \bibitem{givhal}
     S. Givant, P. Halmos \emph{Introduction to Boolean Algebras} Undergraduate Texts in Mathematics, DOI: 10.1007/978-0-387-68436-9, co. Springer Science+Business Media, LLC 2009

   \bibitem{guram}
     G. Bezhanishvili, N. Bezhanishvili, D. Gabelaia, A. Kurz, \emph{Bitopological duality for distributive lattices and Heyting algebras}, Math. Struct. in Comp. Science (2010), vol. 20, pp. 359-393.

   \bibitem{bergman}
     C. Bergman, \emph{Universal Algebra: Fundamentals and Selected Topics}, Chapman and Hall/CRC (2011), Chapman \& Hall Pure and Applied Mathematics (Book 301), ISBN-10: 1439851298, ISBN-13: 978-1439851296
   \bibitem{duns}
     I. Duntsch, E. Orłowska, \emph{Discrete duality for rough relation algebras}, Fundamenta Informaticae, vol. 127, no. 1-4, pp. 35-47, 2013

   \bibitem{ec}
     E. Crockett, PhD candidate of F. Guzman, Binghamton University.
\end{thebibliography}
\end{document}